\theoremstyle{plain}
\newtheorem{thm}{Theorem}[section]
\newtheorem{dfn}[thm]{Definition}
\newtheorem{lem}[thm]{Lemma}
\newtheorem{prop}[thm]{Proposition}
\newtheorem{cor}[thm]{Corollary}
\newtheorem{ex}[thm]{Example}
\theoremstyle{remark}
\newtheorem{oss}[thm]{Remark}
\DeclareMathOperator{\spn}{span} 
\DeclareMathOperator{\I}{Id}
\DeclareMathOperator{\End}{End}
\DeclareMathOperator{\Aut}{Aut}
\DeclareMathOperator{\modue}{(\mathrm{mod}~2)}
\begin{document}

\begin{center}

{\Large \bf Artin group injection in the Hecke algebra for right-angled groups}

 \vspace{0.5cm}

 Paolo Sentinelli \\

%
%
%

{\em paolosentinelli@gmail.com } \\

\end{center}

\begin{abstract}
We prove some injectivity results: that a Coxeter monoid $\mathbb{Z}$-algebra (or $0$-Hecke algebra) injects in the incidence $\mathbb{Z}$-algebra of the corresponding
Bruhat poset, for any Coxeter group; that the Hecke algebra of a right-angled Coxeter group injects in the Coxeter monoid $\mathbb{Z}[q,q^{-1}]$-algebra (and then in the
incidence $\mathbb{Z}[q,q^{-1}]$-algebra of the corresponding Bruhat poset); that a right-angled Artin group injects in the group of invertible elements of the Hecke
algebra of the corresponding Coxeter group (and then in the group of invertible elements of a Coxeter monoid algebra and in the one of an incidence algebra).
\end{abstract}

\section{Introduction}

The HOMFLYPT-polynomials provide an important knot invariant; in the celebrated article of Jones \cite{Jones} they are defined by the Ocneanu's trace on the Hecke algebra of type $A_n$. The trace is
computed on the image of an element of the braid group $B_{n+1}$ under the representation given by the assignment $\sigma_i \mapsto T_{s_i}$, where
$\{\sigma_1,...,\sigma_n\}$
is the set of generators of $B_{n+1}$ and $T_{s_i}$ is a generator of the Hecke algebra of the Coxeter system $(S_{n+1},\{s_1,...,s_n\})$ of type
$A_n$, for all $1\leqslant i \leqslant n$. Here $S_{n+1}$ denotes the symmetric group of order $(n+1)!$. The injectivity of this group morphism is an open problem (except in
small cases). The Burau representation, which is a representation of the Hecke algebra of type $A_n$, solves this
problem for $n<3$ since in these cases the faithfulness (as a representation of the braid group) is known. For $n>3$ the Burau representation is not faithful and the
faithfulness for $n=3$ is unknown (see, e.g., Turaev paper \cite{turaev-braid}). It is worth to mention a paper by  Brunat, Magaard and Marin \cite{Image of the braid groups} devoted to the study of the image of this morphism in the finite field case. Clearly the image is a finite group and then the morphism in not injective.


In general, the assignment of the generators of an Artin group to the respective generators of the Hecke algebra of a Coxeter system of same type, furnishes a group morphism
of the Artin group to the group of invertible elements of this Hecke algebra. The injectivity of this morphism seems to be a natural problem. In this article we prove that
such a morphism is injective for the class of right-angled Artin groups (sometimes known as \emph{graph groups}). We refer to \cite{Charney righ-anlged artin} and \cite{Wise} for a wide exposition of problems where such groups
appear; the reader can appreciate their relevance in topology and geometry.

The central argument of the proof of our results lies in the existence of an integral faithful representation of the Coxeter monoid $\mathbb{Z}$-algebra ($0$-Hecke algebra)
of any Coxeter system $(W,S)$, made by idempotent functions which are the projections $P^J: W \rightarrow W^J$ over a set $W^J$ of representative of the quotient of the
Coxeter group $W$ by a parabolic subgroup $W_J$. In particular we use the fact that, viewing these projections as endomorphisms of the free $\mathbb{Z}$-module generated by
$W$, the endomorphisms $P^w:=P^{\{s_1\}}P^{\{s_2\}}\cdots P^{\{s_k\}}$
corresponding to a reduced expression $s_1s_2\cdots s_k$ for $w\in W$ depend only on the elements of $W$ and they are linearly independent.
An important observation is that an endomorphism $P^w$ can be considered as an element of the incidence algebra of the Bruhat poset of $W$.
This results permit to prove, when $R^A$ is a right-angled Artin group generated by $\{s_1,...,s_n\}$, that the assignments $s_i \mapsto q\I-(q+1)P^{s_i}$ and $s_i \mapsto
-\I+(q+1)P^{s_i}$, for all $i\in \{1,2,...,n\}$,
provide the stated injection of $R^A$ in the Hecke algebra $\mathcal{H}(R)$, consequently in the Coxeter monoid algebra over the ring $\mathbb{Z}[q,q^{-1}]$ (see
Theorems~\ref{rep faith} and \ref{rappresentazioni Artin}), and in the incidence algebra of the Bruhat poset of $R$ (Corollary \ref{corollario iniezioni}).

Our results provide also a class of finite-dimensional representations of any right-angled Artin group; in fact these groups act, via the $0$-Hecke algebras in which are embedded, on any lower Bruhat interval of the corresponding Coxeter groups (see the end of Section \ref{4}).

In the finite case the representation theory of the $0$-Hecke algebras was initiated and extensively studied by Norton in \cite{Norton}. A realization of these algebras by projections
over the parabolic quotients was already pointed out and investigated (see, e.g, \cite{HirSchilThie} and \cite{biHecke}). Since we are interested in the infinite case, we
have developed the theory for arbitrary Coxeter systems.

The interest in the Coxeter monoids and, mostly, in the Coxeter monoid algebra of finite monoids is evident looking at the wide literature.  Besides the cited one of P. N.
Norton, general results can be found in \cite{Fayers2005} , \cite{He-subalgebra}, \cite{He-center}, \cite{KenneyCoxetermonoid}, \cite{Tsaranov monoid}. In type $A$ we can
quote, among others, \cite{Carter} and \cite{denton2}. Various actions of the $0$-Hecke algebra of type $A$ are constructed in  \cite{Fomin}, \cite{hivert novelli thibon},
\cite{Huang1}, \cite{Huang2},   \cite{KrobThibon}, \cite{McNamara}, \cite{tewari}, with results related to quasisymmetric functions and noncommutative symmetric functions.
More general results in the setting of representation theory of monoid algebras can be found, e.g., in \cite{denton1} and \cite{quivers monoids}.

The content of the paper is arranged in the following way. Section \ref{1} is devoted to establish notation and to recall known definitions and results
used in the ensuing sections. In Section \ref{2} we show some properties of the projections $P^J: W \rightarrow W^J$; in particular we prove that
two projections commute when acting on a finite Coxeter group if and only if they commute on the maximum of the group.
This result can be useful for computational purpose; in \cite{sentinelli-right} it is used to implement the non-commuting graph of the projections
$P^J$ in type $A_n$; this graph is conjectured to be $n$-universal \cite[Conjecture~4.5]{sentinelli-right}.

The algebra $M$ generated by the set of projections $\{P^J:J\subseteq S\}$ and an integral representation of the Coxeter monoid algebra, obtained with the assignment $s \mapsto
P^{\{s\}}$ for all $s\in S$, are the subjects of Section \ref{3}. We see that for finite Coxeter groups the algebra $M$ is isomorphic to the Coxeter monoid algebra;
in the infinite case the second one is isomorphic to a proper subalgebra of $M$.
In fact, the Coxeter monoid algebra is realized as the algebra generated by the set of idempotents $\{P^{\{s\}}:s\in S\}$ (Theorem \ref{isomorfismi di algebre}); moreover this algebra injects in the incidence algebra of the poset $(W,\leqslant)$, where $\leqslant$
is the Bruhat order (Corollary \ref{cor inject 1}).
Section \ref{4} presents the main results of this article, i.e. the injection of a right-angled Artin group $R^A$ in the Hecke algebra of the Coxeter group of same type
(Theorem \ref{rappresentazioni Artin}), the injection of this Hecke algebra in the Coxeter monoid $\mathbb{Z}[q,q^{-1}]$-algebra (Proposition \ref{rep faith}), which injects
in the incidence $\mathbb{Z}[q,q^{-1}]$-algebra of the Bruhat poset $(R,\leqslant)$ (Corollary \ref{cor inject 2}). As a consequence we obtain, for any element $v$ of its
corresponding right-angled Coxeter group $R$, a representation  of the group $R^A$ over the field $\mathbb{Q}$, of dimension $|\{z\in R:z\leqslant v\}|<\infty$.

\section{Notations and preliminaries} \label{1}
In this section we establish some notation and we collect some basic
results from the theory of Coxeter systems, Coxeter monoids  and Hecke algebras
which will be useful in the sequel. The reader can consult
\cite{BB} and \cite{Hum} for further details. For the isomorphism problem of Coxeter systems we refer to \cite{BahlsTheIsomorphism}. We follow \cite[Chapter~3]{StaEC1} for
notation and terminology concerning posets and \cite{Diestelgraph} for graphs. For the general theory of
ordered monoids and representations of finite monoids the reader can consult \cite{Blyth-ordered structures} and
\cite{Steinberg-Monoid} respectively.

We let $\mathbb{Z}$ be the ring of integers and $\mathbb{Q}$ the field of rational numbers. With $\mathbb{N}$ we denote the set of non-negative
integers. For any $n\in \mathbb{N}$ we let $[n]:=\Set{1,2,...,n}$; in
particular $[0]=\varnothing$. With $\biguplus$ we denote the disjoint union, with $|X|$ the cardinality of a set $X$ and with $\mathcal{P}(X)$ its power set. Given any
category, $\End(O)$ and $\Aut(O)$
denote the set of endomorphisms and automorphisms of an object $O$ respectively. The category of posets is the one
whose objects are posets and whose morphisms are order preserving functions. Given a poset $(P,\leqslant)$, any pair $(x,y)\in P \times P$ satisfying
$x\leqslant y$ defines an \emph{interval}  $[x,y]:=\{z\in P:x\leqslant z \leqslant y\}$. The set of intervals of $(P,\leqslant)$ is denoted with $\mathrm{Int}(P)$;
the poset is called \emph{locally finite} if $|[x,y]|<\infty$ for all $[x,y]\in \mathrm{Int}(P)$. The \emph{incidence algebra}  $I(P;Z)$, over a ring $Z$,  of a
locally finite poset $(P,\leqslant)$ is the $Z$-algebra of {functions}\footnote{We write $f(x,y)$ for $f([x,y])$.} $f: \mathrm{Int}(P) \rightarrow Z$, whose product is
defined by
$$(fg)(x,y):=\sum \limits_{z\in [x,y]}f(x,z)g(z,y),$$ for all $f,g \in I(P;Z)$, $[x,y]\in \mathrm{Int}(P)$. When $P$ is finite, the incidence algebra is isomorphic to a subalgebra of the algebra of upper
triangular matrices with coefficients in the ring $Z$, where an isomorphism is given once any linear extension of the poset is fixed (see \cite{Incidence algebras} for
further deepening on incidence algebras).

Let $(W,S)$ be a Coxeter system. This is a presentation of the group $W$ given by a set $S$ of involutive generators and relations encoded by a \emph{Coxeter matrix}
$m:S\times S \rightarrow \{1,2,...,\infty\}$ or, equivalently, by a \emph{Coxeter graph} (see \cite[Chapter~1]{BB}). A Coxeter matrix over $S$ is a symmetric matrix which
satisfies the following conditions:
\begin{enumerate}
  \item $m(s,t)=1$ if and only if $s=t$;
  \item $m(s,t)\in \{2,3,...,\infty\}$, if $s\neq t$,
\end{enumerate} for all $s,t\in S$. The Coxeter graph associated to a Coxeter system $(W,S)$ with Coxeter matrix $m$, is a labeled graph whose vertices are the elements of
$S$, whose edges are given by the sets $\{s,t\}$ such that $m(s,t) > 2$, displaying the label $m(s,t)$ whenever $m(s,t)\geqslant 4$, for all $s,t\in S$. The presentation
$(W,S)$ of the group $W$ is then the following:
$$\left\{
  \begin{array}{ll}
    \mathrm{generators}: & \hbox{$S$;} \\
    \mathrm{relations}: & \hbox{$(st)^{m(s,t)}=e$,}
  \end{array}
\right.$$ for all $s,t\in S$, where $e$ denotes the identity in $W$. The Coxeter matrix $m$ attains the value $\infty$ at $(s,t)$ to indicate that there is no relation
between the generators $s$ and $t$.

The elements of the group $W$ with the given Coxeter presentation
can be viewed as words in the alphabet $S$; the class of words expressing an element of $W$ contains words of minimal length; the \emph{length function} $\ell: W \rightarrow
\mathbb{N}$ assigns to an element $w\in W$ such a minimal length. The identity $e$ is represented by the empty word and then $\ell(e)=0$. A word of minimal length, expressing
an element $w\in W$, is called a \emph{reduced word} or \emph{reduced expression} for $w$.
If $J\subseteq S$, we let
\begin{gather*} W^J:=\Set{w\in W:\ell(ws)>\ell(w)~\forall~s\in J},
\\ {^JW}:=\Set{w\in W:\ell(sw)>\ell(w)~\forall~s\in J},
\\ D_L(w):=\Set{s\in S:\ell(sw)<\ell(w)},
\\ D_R(w):=\Set{s\in S:\ell(ws)<\ell(w)}.
\end{gather*}
Let $w\in W^J$. It is useful to recall that exactly one of the following three possibilities occurs (see \cite[Lemma~3.1]{deodharII}):
\begin{enumerate}
  \item $s \in D_L(w)$. In this case $sw\in W^J$.
  \item $s \not \in D_L(w)$ and $sw\in W^J$.
  \item $s \not \in D_L(w)$ and $sw \not \in W^J$. In this case $sw=ws'$ for a unique $s'\in J$.
\end{enumerate}
By definition $W^I \cap W^J = W^{I\cup J}$ and ${^I}W \cap {^J}W = {^{I\cup J}}W$. In the literature, the elements of the sets $W^J$ and ${^J}W$
are sometimes called \emph{reduced}-$J$ and $J$-\emph{reduced} respectively.

With $W_J$ we denote the subgroup of $W$ generated by $J\subseteq S$; such a group is usually called a \emph{parabolic subgroup}. In particular $W_S=W$ and $W_\varnothing =
\Set{e}$. We say that the set $J$ is \emph{connected} if the Coxeter graph of $(W_J,J)$ is connected.

When the group $W_J$ is finite, there exists a unique element $w_0(J)$ of maximal length and $D_L(w_0(J))=D_R(w_0(J))=J$ (\cite[Proposition~2.3.1]{BB}). When $J=S$ we write
$w_0$ instead of $w_0(S)$.

Given a Coxeter presentation $(W,S)$, we consider on $W$ the Bruhat order $\leqslant$ (see, e.g., \cite[Chapter~2]{BB} or \cite[Chapter~5]{Hum}).
Such an order can be defined in the following way: let $u,v \in W$ and $s_1s_2 \cdots s_k$ be a reduced word for $v\in W$. Then $u \leqslant v$ if and only if a word
expressing $u$ can be obtained deleting some generators in the reduced word $s_1s_2 \cdots s_k$.

We recall a characterizing property of the
Bruhat order, known as  \emph{lifting property} (see \cite[Proposition~2.2.7 and Exercise~2.14]{BB}):
\begin{prop} \label{sollevamento} Let $v,w\in W$ such that $v<w$ and $s\in D_R(w)\setminus D_R(v)$. Then $v\leqslant
ws$ and $vs\leqslant w$.
\end{prop}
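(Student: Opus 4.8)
The plan is to derive both inequalities from the subword characterization of the Bruhat order, i.e. the standard fact that $u\leq w$ if and only if every reduced expression of $w$ admits a reduced subword spelling $u$ (see \cite[Theorem 2.2.2]{BB}). First I would translate the hypotheses into length statements: since $s\in D_R(w)$ we have $\ell(ws)=\ell(w)-1$, and since $s\notin D_R(v)$ we have $\ell(vs)=\ell(v)+1$. These two facts are exactly what makes the deletion/insertion of the letter $s$ behave predictably, and they are the only numerical input needed.

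Next I would fix a reduced expression $w=s_1 s_2\cdots s_q$ with $s_q=s$; such an expression exists precisely because $s\in D_R(w)$, and then $s_1\cdots s_{q-1}$ is a reduced expression for $ws$ (it has length $q-1=\ell(ws)$). Since $v<w$, the subword property applied to this chosen expression yields a reduced subword $\mathbf{v}$ of $s_1\cdots s_q$ spelling $v$. The crux is the following observation: $\mathbf{v}$ cannot use the final position $q$. Indeed, if it did, then since $q$ is the largest index, $s_q=s$ would be the last letter of $\mathbf{v}$, exhibiting a reduced expression of $v$ ending in $s$ and forcing $s\in D_R(v)$, contrary to hypothesis. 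Hence $\mathbf{v}$ lies entirely within the prefix $s_1\cdots s_{q-1}$.

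Both conclusions then follow at once. Because $\mathbf{v}$ is a reduced subword of $s_1\cdots s_{q-1}$, the subword property gives $v\leq ws$. Appending the final letter, $\mathbf{v}\,s$ is a subword of $s_1\cdots s_q$ of length $\ell(v)+1=\ell(vs)$; it is reduced exactly because $s\notin D_R(v)$, and it spells $vs$, so $vs\leq w$. The step I expect to require the most care is this position argument: one must invoke $s\notin D_R(v)$ to conclude that the reduced subword for $v$ avoids position $q$, and one must verify that appending $s$ produces a genuinely \emph{reduced} subexpression (of the correct length $\ell(vs)$) rather than an arbitrary subexpression, since only reduced subwords witness the Bruhat relation. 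An equivalent alternative would be to first establish the ``property $Z$'' — that for $s\in D_R(x)\cap D_R(y)$ one has $x\leq y \iff xs\leq ys \iff xs\leq y$ — by induction on $\ell(w)$ and then specialize, but the subword argument above is the most direct route to the stated form.
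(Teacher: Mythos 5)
The paper does not actually prove this statement: it is recalled as the standard \emph{lifting property} with a citation to \cite[Proposition 2.2.7 and Exercise 2.14]{BB}, so there is no in-paper argument to compare against. Your subword argument is a correct and complete self-contained proof. The key step --- that a reduced subword of $s_1\cdots s_q$ spelling $v$ cannot occupy the final position $q$, because the subword is order-preserving and would then give a reduced expression of $v$ ending in $s$, forcing $s\in D_R(v)$ --- is sound; from there, $v\leq ws$ follows since the subword sits inside the reduced expression $s_1\cdots s_{q-1}$ of $ws$, and $vs\leq w$ follows since appending $s$ yields a subword of $s_1\cdots s_q$ of length $\ell(v)+1=\ell(vs)$, hence a reduced expression of $vs$. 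This is essentially the classical textbook derivation of the lifting property from the subword characterization, and nothing further is needed.
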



For any $J\subseteq S$, each element $w\in W$ factorizes uniquely as
$w=w^Jw_J$, where $w^J\in W^J$ and $w_J\in W_J$ and $\ell(w)=\ell(w^J)+\ell(w_J)$ (\cite[Proposition~2.4.4]{BB}). We consider the idempotent function $P^J:W
\rightarrow W$ defined by
\begin{equation*} P^J(w)=w^J,
\end{equation*} for all $w\in W$. This function is a morphism of
posets (\cite[Proposition~2.5.1]{BB}):
\begin{prop} \label{ordinepreservato} Let $v,w\in W$ be such that $v \leqslant w$; then $v^J\leqslant
w^J$, for all $J\subseteq S$.
\end{prop}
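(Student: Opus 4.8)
The plan is to derive the statement entirely from the lifting property (Proposition~\ref{sollevamento}), exploiting the fact that $w^J$ is the Bruhat-minimal (equivalently, minimal-length) representative of the coset of $w$ modulo $W_J$: in particular $w^J\leq w$, while $w^J\in W^J$ says precisely that $s\notin D_R(w^J)$ for every $s\in J$. The operative observation is that one passes from $w$ to $w^J$ by successively right-multiplying by generators $s\in J\cap D_R(w)$, each such step lowering the length by one and staying inside the same coset.

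The heart of the argument is the following claim: \emph{if $x\in W^J$ and $x\leq w$, then $x\leq w^J$.} Granting this, the Proposition follows in one line. Apply the claim to $x=v^J$: it lies in $W^J$ and satisfies $v^J\leq v\leq w$, so the claim yields $v^J\leq w^J$, which is exactly what we want. Thus all the real content is concentrated in this auxiliary claim, and the passage from it to the Proposition is immediate.

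To prove the claim I would induct on $\ell(w)-\ell(w^J)$, the length of the $W_J$-part of $w$. If this number is $0$ then $w=w^J$ and there is nothing to prove. Otherwise $w\notin W^J$; since $x\in W^J$ we get $x\neq w$, hence $x<w$. Choose $s\in J$ with $\ell(ws)<\ell(w)$. Because $s\in J$, right multiplication by $s$ preserves the coset, so $(ws)^J=w^J$, while $\ell(ws)-\ell((ws)^J)=\ell(w)-\ell(w^J)-1$. Now $s\in D_R(w)$, whereas $s\notin D_R(x)$ since $x\in W^J$; thus $s\in D_R(w)\setminus D_R(x)$ and $x<w$, so Proposition~\ref{sollevamento} applies and gives $x\leq ws$. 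By the inductive hypothesis applied to $ws$ we conclude $x\leq (ws)^J=w^J$, closing the induction.

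The single delicate point is the correct bookkeeping of sides, namely making sure that the coset is taken on the side on which $W_J$ acts by right multiplication, so that the condition $s\notin D_R(x)$ encoded by $x\in W^J$ matches the hypothesis $s\in D_R(w)\setminus D_R(x)$ demanded by the lifting property. I expect this to be the only obstacle: once the claim is phrased so that a \emph{single} invocation of Proposition~\ref{sollevamento} produces $x\leq ws$, the induction and the final specialization $x=v^J$ are both routine.
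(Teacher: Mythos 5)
Your proposal is correct. Note that the paper does not actually prove this statement: it is quoted from \cite[Proposition 2.5.1]{BB}, so there is no internal proof to compare against. Your argument --- reducing to the claim that $x\in W^J$ and $x\leq w$ imply $x\leq w^J$, and proving that claim by induction on $\ell(w)-\ell(w^J)$ via a single application of the lifting property at each step --- is the standard proof of this fact and is complete: the key points ($(ws)^J=w^J$ because $s\in J$ preserves the coset $wW_J$, $s\notin D_R(x)$ because $x\in W^J$, and $x<w$ because $w\notin W^J$ while $x\in W^J$) are all correctly justified.
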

In a similar way one defines the projection $Q^J:W \rightarrow
{^JW}$ by $Q^J(w)={^Jw}$. The analogues of the last two results hold
for $Q^J$. Summarizing, an element $w\in W$ has unique expressions
\begin{equation} \label{fattorizzazioni} w=P^J(w)P_J(w)=Q_J(w)Q^J(w), \end{equation}
where the maps $P_J,Q_J:W \rightarrow W_J$ are defined in the
obvious way. By~\eqref{fattorizzazioni} follows that $P^J(w),P_J(w),Q^J(w),Q_J(w)\leqslant w$ for all $w\in W$, $J\subseteq S$. So by Proposition \ref{ordinepreservato} the
functions $P^J$, $P_J$, $Q^J$ and $Q_J$ are \emph{regressive} order preserving functions for the poset $(W,\leqslant)$ (see \cite[Definition~2.7]{denton1}).

The following result, and its right version, will be useful in the sequel.
\begin{lem} \label{proj fascio 1 e 2}
Let $(W,S)$ be a Coxeter system and $I \subseteq J \subseteq S$. Then
\begin{enumerate}
  \item $P^J \circ P^I=P^J$;
  \item $P_I \circ P_J=P_I$.
\end{enumerate}
\end{lem}
\begin{proof}
Let $w\in W$. We have that $w = w^Iw_I =w^Jw_J= w^J(w_J)^I(w_J)_I$. Since $w^J(w_J)^I \in W^I$ because $s\in I$ implies $(w_J)^I < (w_J)^Is \in W_J$,
we have that $w^I=w^J(w_J)^I$ and $w_I=(w_J)_I$. From the equality $w^I=w^J(w_J)^I$ also follows that $(w^I)^J=w^J$.
\end{proof}

\begin{oss}Analogous properties of the projections $P^J$  are satisfied also by the
\emph{parabolic map} defined and studied in \cite{billey}.
When the group $W_J$ is finite a function $P^{\setminus J}: W \rightarrow W$ can be defined
by $$P^{\setminus J}(w)=P^J(w)w_0(J),$$ for all $w\in W$. It is easy to see that $P^{\setminus J}$ is idempotent and order preserving. The idempotents $P^J$ together with the
idempotents $P^{\setminus J}$
generate the biHecke monoid (see \cite[Section~1]{biHecke}, where these operators are respectively called \emph{bubble sorting operators} and \emph{bubble antisorting
operators}). In Section \ref{3} we use the idempotents $P^J$ to realize Coxeter monoid algebras, although a realization by bubble antisorting
operators is also possible. \end{oss}

Another property of the projections on $W^J$ and ${^J}W$ is that the right projections commute with the left ones (for a proof of this result see
\cite[Lemma~2.6]{sentinelli-isomoprhism}).
\begin{lem} \label{proiezionicommutano} Let $I,J\subseteq S$; then the projections $P^J$ and
$Q^I$ commute, i.e.
$$P^J\circ Q^I = Q^I \circ P^J.$$
\end{lem}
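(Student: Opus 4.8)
The plan is to prove that, for every $w\in W$, both $P^J(Q^I(w))$ and $Q^I(P^J(w))$ equal the unique minimal-length element of the double coset $W_IwW_J$; commutativity is then immediate. First I would record the coset interpretation of the two projections: $P^J(w)=w^J$ is the unique element of $W^J$ contained in the left coset $wW_J$ (equivalently its Bruhat minimum, since $P^J(w)\leqslant w$ and $P^J$ is order preserving by Proposition~\ref{ordinepreservato}), and dually $Q^I(w)={^Iw}$ is the unique element of ${^I}W$ contained in the right coset $W_Iw$. I would then invoke the standard theory of minimal double-coset representatives, valid for an arbitrary Coxeter system: each double coset $W_IwW_J$ possesses a unique element of minimal length, and that element is exactly the unique point of ${^I}W\cap W^J$ lying in $W_IwW_J$ (see, e.g., \cite{BB}).

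The technical heart is a descent-nesting observation: if $v=v^Jv_J$ is the parabolic factorisation, so that $\ell(v)=\ell(v^J)+\ell(v_J)$, then $D_L(v^J)\subseteq D_L(v)$. Indeed, for $s\in D_L(v^J)$ one has $\ell(sv^J)=\ell(v^J)-1$, and subadditivity of the length function gives $\ell(sv)\leqslant\ell(sv^J)+\ell(v_J)=\ell(v)-1<\ell(v)$, i.e. $s\in D_L(v)$. The consequence I need is that $P^J$ maps ${^I}W$ into itself for all $I,J\subseteq S$: if $u\in{^I}W$ then $D_L(P^J(u))\cap I\subseteq D_L(u)\cap I=\varnothing$, while $P^J(u)\in W^J$ by definition, so $P^J(u)\in{^I}W\cap W^J$. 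Running the same argument with right descents and the right parabolic factorisation $v=Q_I(v)\,{^Iv}$ — or, equivalently, transporting the previous statement through the length- and order-preserving involution $v\mapsto v^{-1}$, which swaps $W^J$ with ${^J}W$ — shows dually that $Q^I$ maps $W^J$ into itself.

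Finally I would assemble these facts. Fix $w\in W$. Since $Q^I(w)\in{^I}W$, the first consequence yields $P^J(Q^I(w))\in{^I}W\cap W^J$; moreover $P^J(Q^I(w))\in Q^I(w)W_J\subseteq W_IwW_J$, so $P^J(Q^I(w))$ is the minimal element of $W_IwW_J$. Symmetrically, $P^J(w)\in W^J$ forces $Q^I(P^J(w))\in{^I}W\cap W^J$, and $Q^I(P^J(w))\in W_IP^J(w)\subseteq W_IwW_J$, so it is the same minimal element. Hence $P^J(Q^I(w))=Q^I(P^J(w))$ for every $w\in W$, which is the claim.

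The descent-nesting lemma is elementary, so the genuine content lies in recognising that both composites are trapped inside ${^I}W\cap W^J$ within one and the same double coset; the point requiring the most care is the availability of a \emph{unique} minimal double-coset representative for possibly infinite parabolic subgroups $W_I,W_J$, where one cannot lean on a maximal element $w_0$ as in the finite case.
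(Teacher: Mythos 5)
Your argument is correct. Note, however, that the paper does not prove Lemma~\ref{proiezionicommutano} at all: it defers to the external reference \cite{sentinelli-isomoprhism}, so there is no in-paper proof to compare against. Your route --- showing that both composites land in ${^I}W\cap W^J$ inside the single double coset $W_IwW_J$, and invoking the existence and uniqueness of the distinguished (minimal-length) double coset representative --- is a clean, self-contained structural proof, and every step checks out: the descent-nesting inequality $D_L(v^J)\subseteq D_L(v)$ follows from subadditivity of $\ell$ exactly as you say, it correctly yields $P^J({^I}W)\subseteq {^I}W\cap W^J$ (and dually for $Q^I$), and the coset containments $P^J(Q^I(w))\in Q^I(w)W_J\subseteq W_IwW_J$ and $Q^I(P^J(w))\in W_IP^J(w)\subseteq W_IwW_J$ are immediate from the factorizations \eqref{fattorizzazioni}. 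The one external input is the fact that ${^I}W\cap W^J$ meets each $(W_I,W_J)$-double coset in exactly one point, valid for arbitrary (possibly infinite) Coxeter systems; this is standard (Bourbaki, Ch.~IV, \S 1, Exercise~3, or Geck--Pfeiffer), though I would not attribute it to \cite{BB} without pointing to a precise statement there. An alternative, more elementary route that avoids double coset theory altogether is a direct induction on $\ell(w)$ using the trichotomy of Lemma~\ref{lemma2}, in the spirit of the other proofs in Section~\ref{1}; your argument trades that induction for a standard structural fact, which arguably makes the mechanism (both composites compute the distinguished representative of $W_IwW_J$) more transparent.
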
 

Given a Coxeter system $(W,S)$ with Coxeter matrix $m:S\times S \rightarrow \{1,2,...,\infty\}$, the corresponding
Coxeter monoid $W^M$ is the monoid with identity $e$ generated by the set $S$, satisfying the following relations:

$$\left\{
    \begin{array}{ll}
      s^2=s; & \\
      (st)^{m(s,t)/2}=(ts)^{m(s,t)/2}, & \hbox{if $m(s,t) \equiv 0\modue$;} \\
      t(st)^{(m(s,t)-1)/2}=s(ts)^{(m(s,t)-1)/2}, & \hbox{if $m(s,t) \equiv 1\modue$,}
    \end{array}
  \right.
$$ for all $s,t\in S$. Note that as sets $W=W^M$. The following definition establishes the notion of ordered monoid.
\begin{dfn}
  A poset $(M,\leqslant)$ is an \emph{ordered monoid} if $M$ is a monoid and
  $x_1\leqslant y_1$, $x_2\leqslant y_2$ implies $x_1x_2\leqslant y_1y_2$,
  for all $x_1,x_2,y_1,y_2 \in M$.
\end{dfn} Although a Coxeter group $W$ with the Bruhat order is not an ordered monoid, the Coxeter monoid is ordered (see~\cite{KenneyCoxetermonoid} for further results on
Coxeter monoids and, in particular, \cite[Lemma~2]{KenneyCoxetermonoid}).
\begin{prop} \label{coxeter monoid is ordered}
  The Coxeter monoid $W^M$ with the Bruhat order is an ordered monoid.
\end{prop}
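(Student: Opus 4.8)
The plan is to work with the concrete description of the Coxeter monoid product. Writing $\ast$ for the multiplication of $W^M$ so as to distinguish it from the group product of $W$, I recall the standard rule that for $w\in W$ and $s\in S$ one has $w\ast s=ws$ if $\ell(ws)>\ell(w)$ and $w\ast s=w$ if $s\in D_R(w)$; in particular $w\ast s=\max_{\leq}\{w,ws\}$ is the Bruhat-larger of the two comparable elements $w,ws$. Since a reduced word $s_1\cdots s_k$ for $w$ satisfies $w=s_1\ast\cdots\ast s_k$ (the partial products remain reduced, so lengths strictly increase and the idempotent relation is never triggered), associativity gives $x\ast w=x\ast s_1\ast\cdots\ast s_k$ and $w\ast x=s_1\ast\cdots\ast s_k\ast x$ for every $x\in W$. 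I would first reduce the ordered-monoid inequality to the two one-sided monotonicity statements, namely (a) $x\leq y\Rightarrow x\ast z\leq y\ast z$ and (b) $x\leq y\Rightarrow z\ast x\leq z\ast y$, valid for all $z\in W$. Granting these, the chain $x_1\ast x_2\leq y_1\ast x_2\leq y_1\ast y_2$, using (a) with $z=x_2$ and then (b) with $z=y_1$, yields the claim.

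Next I would reduce each one-sided statement to the case of a single generator. Using the decomposition above and induction on $\ell(z)$, property (a) follows once we know that $x\leq y\Rightarrow x\ast s\leq y\ast s$ for every $s\in S$: writing $z=s_1\cdots s_k$ reduced and setting $x_i:=x\ast s_1\ast\cdots\ast s_i$, $y_i:=y\ast s_1\ast\cdots\ast s_i$, the single-generator inequality promotes $x_{i-1}\leq y_{i-1}$ to $x_i\leq y_i$, and iterating gives $x\ast z\leq y\ast z$. The left-hand statement (b) is entirely analogous, using left descents $D_L$ and the left-handed single-generator inequality $x\leq y\Rightarrow s\ast x\leq s\ast y$; alternatively it can be deduced from (a) by applying the poset automorphism $w\mapsto w^{-1}$, which is also a monoid anti-automorphism of $W^M$ (it is induced by word reversal, under which the defining relations are invariant).

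The core of the argument is therefore the single-generator monotonicity $x\leq y\Rightarrow x\ast s\leq y\ast s$, which I would prove by analysing the four cases according to whether $s$ lies in $D_R(x)$ and in $D_R(y)$. Three of these are immediate from $x\ast s\in\{x,xs\}$ together with $x\leq y\leq y\ast s$: if $s\in D_R(x)$ then $x\ast s=x\leq y\leq y\ast s$, while if $s\in D_R(y)\setminus D_R(x)$ then $x<y$ and the lifting property (Proposition \ref{sollevamento}, with $v=x$ and $w=y$) gives $xs\leq y=y\ast s$. The one genuinely delicate case is $s\notin D_R(x)\cup D_R(y)$, where $x\ast s=xs$ and $y\ast s=ys$ and we must show $xs\leq ys$. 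Here I would first note that $x\leq y<ys$, hence $x<ys$; since $s\in D_R(ys)\setminus D_R(x)$, the lifting property applied to $v=x$ and $w=ys$ yields exactly $vs=xs\leq ys=w$. This last case, and the recognition that the lifting property is precisely the tool that promotes $x\leq y$ into $xs\leq ys$ when both elements ascend under $s$, is the main obstacle; the remaining bookkeeping is routine.
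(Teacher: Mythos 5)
Your argument is correct. Note that the paper itself gives no proof of this proposition: it is quoted from T.~Kenney's paper (cited there as Lemma~2), so there is no internal proof to compare against. Your route — reduce the two-sided inequality to the two one-sided monotonicity statements, reduce each of those to multiplication by a single generator via a reduced word for $z$ and associativity (or, for the left-sided version, via the anti-automorphism $w\mapsto w^{-1}$), and then settle the single-generator case by a descent analysis in which the lifting property (Proposition~\ref{sollevamento}) handles both the case $s\in D_R(y)\setminus D_R(x)$ and the case $s\notin D_R(x)\cup D_R(y)$ (the latter applied to the pair $x<ys$) — is the standard and essentially only natural proof, and all four descent cases are accounted for. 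The one ingredient you take for granted is the identification of $W^M$ with $W$ as a set together with the Demazure-product rule $w\ast s=\max_{\leq}\{w,ws\}$; the paper likewise asserts $W=W^M$ without proof, so this is a reasonable starting point, but strictly speaking the well-definedness of that concrete product (i.e.\ that it respects the defining relations of $W^M$) is the same kind of fact the paper later proves for the operators $P^u$ via Lemma~\ref{lemma st}, and a fully self-contained write-up would either cite it or verify it.
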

Let $A:=\mathbb{Z}[q^{-1},q]$ be  the ring of Laurent
polynomials in the indeterminate~$q$. For any Coxeter system $(W,S)$, the \emph{Hecke
algebra} $\mathcal{H}(W,S)$ is the free $A$-module with basis
$\Set{T_w:w\in W}$ and product defined by
\begin{equation*}T_wT_s=\begin{cases}  T_{ws}, & \mbox{if $s\not \in
D_R(w)$,} \\  qT_{ws}+(q-1)T_w, & \mbox{otherwise,}
\end{cases}\end{equation*} for all $w\in W$ and $s\in S$.  For $s\in S$ one can easily see that \begin{equation*} T_s^{-1}=(q^{-1}-1)T_e+q^{-1}T_s
\end{equation*} and then use this to invert all the elements $T_w$, where $w\in W$. On $\mathcal{H}(W,S)$ there is an involution $\iota$,
as defined in \cite{kazhdanlusztig}, such that
\begin{equation*}\label{involuzionealgebra}\iota(q)=q^{-1},~~ \iota(T_w)=T^{-1}_{w^{-1}}, \end{equation*}
for all $w\in W$. Furthermore (see, e.g., \cite{Hum}) this function is a ring automorphism, i.e.
 \begin{equation*} \label{iotamorfanelli}
\iota(T_vT_w)=\iota(T_v)\iota(T_w),
\end{equation*} for all $v,w\in W$.

\begin{dfn}
  Given a Coxeter system $(W,S)$, the \emph{Coxeter monoid algebra} $Z[W^M]$ is the monoid algebra over the ring $Z$ of the Coxeter monoid $W^M$.
\end{dfn}

\begin{oss}
 The $0$-Hecke algebra is the specialization of $\mathcal{H}(W,S)$ at
$q=0$ and it is isomorphic to the Coxeter monoid algebra $\mathbb{Z}[W^M]$, as one can see via the isomorphism defined by $T_s\mapsto -s$.
\end{oss}

We end this section recalling some facts about right-angled Coxeter and Artin groups. For further deepening on these groups and their relevance
in geometry and topology one can consult the books \cite{Davis} and \cite{Wise}, and the paper \cite{Charney righ-anlged artin}.
\begin{dfn}
  Let $(W,S)$ be a Coxeter system with Coxeter matrix $m: S\times S \rightarrow \{1,2,...,\infty\}$.
 The system $(W,S)$ is called \emph{right-angled} if $\{\infty\} \subseteq \{m(s,t):s,t \in S\}\subseteq \{1,2,\infty\}$.
\end{dfn}
Given a Coxeter system $(W,S)$ with Coxeter matrix $m: S\times S \rightarrow \{1,2,...,\infty\}$, the Artin group $W^A$ of type $(W,S)$
is the group given by the following presentation:
$$\left\{
  \begin{array}{ll}
    \mathrm{generators}: & \hbox{$S$;} \\
    \mathrm{relations}: & \hbox{$\left\{
                                                       \begin{array}{ll}
                                                         (st)^{m(s,t)/2}=(ts)^{m(s,t)/2}, & \hbox{if $m(s,t)\equiv 0 \modue$;} \\
                                                         t(st)^{(m(s,t)-1)/2}=s(ts)^{(m(s,t)-1)/2}, & \hbox{if $m(s,t)\equiv 1 \modue$.}
                                                       \end{array}
                                                     \right.
$.}
  \end{array}
\right.$$

If $(W,S)$ is right-angled then the Artin group $W^A$ is called right-angled. We refer to \cite[Section~4]{geometry right angled artin} for the following facts about right-angled
Artin groups. Let $w\in R^A$ be an element of a right-angled
Artin group $R^A$. Then $w=s_1^{e_1} \cdots s_k^{e_k}$, for some $s_1,...,s_k \in S$, $e_1,...,e_k \in \mathbb{Z}$.
An element $s_i^{e_i}$ is called \emph{syllable} of $w$. The following moves can be applied to $w$:

\begin{enumerate}
  \item remove a syllable $s_i^{e_i}$ if $e_i=0$;
  \item if $s_i=s_{i+1}$ then replace $s_i^{e_i}s_{i+1}^{e_{i+1}}$ by $s_i^{e_i+e_{i+1}}$;
  \item if $s_is_{i+1}=s_{i+1}s_i$ then replace $s_i^{e_i}s_{i+1}^{e_{i+1}}$ with $s_{i+1}^{e_{i+1}}s_i^{e_i}$.
\end{enumerate}

We say that a word representing $w \in R^A$ is \emph{reduced} if its number of syllables is minimal.
The following result holds (see \cite[Theorem~4.1]{geometry right angled artin} and references there):

\begin{thm} \label{teorema artin}
  Any two words representing $w\in R^A$ can be connected via a sequence of the moves above. In particular, if two words are reduced,
  they  can be connected via a sequence of moves of third type.
\end{thm}

A Coxeter group $W$ is said to be \emph{rigid} if, given two Coxeter systems $(W,S)$ and $(W,T)$ there exists
an element $\phi \in \Aut(W)$ such that $\phi(s)\in T$ for all $s\in S$. If $W$ is rigid, the Coxeter system $(W,S)$ (and so the Bruhat order and the Hecke algebra) is
uniquely determined
by the group $W$, modulo automorphisms of its Coxeter graph. The following statement asserts the rigidity of a right-angled Coxeter system (see
\cite[Theorem~3.1]{BahlsTheIsomorphism}).
\begin{thm} \label{rigidità}
  Let $(W,S)$ be a right-angled Coxeter system. Then $W$ is rigid.
\end{thm}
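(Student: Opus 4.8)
The plan is to reduce the rigidity of $(W,S)$ to the reconstruction, from the abstract group $W$ alone, of its defining graph $\Gamma$, whose vertices are the generators and whose edges join the pairs $s,t$ with $m(s,t)=2$. Since a right-angled system is completely determined by $\Gamma$ (the presentation recalled in Section~\ref{1} only records which generators are involutions, which pairs commute, and which generate infinite dihedral groups), two systems $(W,S)$ and $(W,T)$ will be related by an automorphism as soon as their graphs are isomorphic via a correspondence realized by elements of $W$. A first invariant is immediate: abelianizing gives $W^{ab}\cong(\mathbb{Z}/2\mathbb{Z})^{|S|}$, so the number of vertices $|S|=|T|=\dim_{\mathbb{F}_2}W^{ab}$ is intrinsic. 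It therefore remains to recover the vertices as a distinguished set of involutions (up to $\Aut(W)$) and the adjacency relation among them.

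The core step is an intrinsic, automorphism-invariant characterization of the generators among all involutions of $W$. First I would establish the structural fact that, for $s\in S$, the centralizer $C_W(s)$ is the parabolic subgroup generated by $s$ together with all generators commuting with $s$, and that it splits as $C_W(s)=\langle s\rangle\times W_{\mathrm{lk}(s)}$, where $\mathrm{lk}(s)=\{t\in S:m(s,t)=2\}$; this follows from a normal-form analysis for right-angled groups. Using this, I would distinguish genuine generators from the remaining involutions, such as products $st$ of two commuting generators (which are again involutions), by a property preserved under $\Aut(W)$: for instance by analysing the lattice of finite subgroups, the maximal ones being exactly the $W_J$ with $J$ a maximal set of pairwise commuting generators, hence isomorphic to $(\mathbb{Z}/2\mathbb{Z})^{|J|}$, together with the way involutions embed into them, or equivalently by a minimality property of $C_W(s)$ relative to the abelianized image of $s$. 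Once the generators are identified as a set of involutions, the adjacency is recovered group-theoretically: $s$ and $t$ are joined in $\Gamma$ exactly when they commute, and the two remaining cases are separated by $st$ having infinite order (giving $m(s,t)=\infty$) versus $(st)^2=e$ (giving $m(s,t)=2$).

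Finally, applying this reconstruction to both $(W,S)$ and $(W,T)$ produces isomorphic defining graphs, and the graph isomorphism, being realized by a bijection between two generating sets of involutions, extends to an automorphism $\phi\in\Aut(W)$ with $\phi(s)\in T$ for all $s\in S$; as $|S|=|T|$ this forces $\phi(S)=T$ and establishes rigidity. I expect the main obstacle to lie entirely in the second step: singling out the true generators among involutions in a way invariant under \emph{all} automorphisms is delicate, because products of commuting generators and conjugates of generators are involutions that can superficially resemble generators, and the simplest example $W\cong(\mathbb{Z}/2\mathbb{Z})^2$, where the three nontrivial involutions are genuinely interchangeable, already shows that generators can be pinned down only up to automorphism, never absolutely. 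This is precisely the technical heart of the argument, and it is the content carried out in the cited work of Bahls.
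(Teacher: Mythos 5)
First, note that the paper does not prove this statement at all: it is imported verbatim from Bahls \cite[Theorem 3.1]{BahlsTheIsomorphism}, so there is no internal proof to compare yours against. Your proposal correctly identifies the standard strategy behind that external result --- reduce rigidity to reconstructing the defining graph $\Gamma$ from the abstract group, using $\dim_{\mathbb{F}_2}W^{ab}=|S|$ for the vertex count and the order of $st$ (namely $2$ or $\infty$) for the edges --- and the surrounding architecture is sound: an abstract graph isomorphism $\Gamma_S\to\Gamma_T$ does extend to an automorphism of $W$ carrying $S$ onto $T$, since both presentations are faithful.

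However, as a proof the proposal has a genuine gap, and you name it yourself: the automorphism-invariant identification of the generating involutions. Everything you offer for this step is conditional or optional (``for instance by analysing the lattice of finite subgroups \dots\ or equivalently by a minimality property of $C_W(s)$''), and each of these candidate invariants itself rests on nontrivial structural results that you only assert: that $C_W(s)=\langle s\rangle\times W_{\mathrm{lk}(s)}$, that every finite subgroup is conjugate into some $W_J$ with $J$ a clique, and --- most delicately --- that whatever invariant you choose actually separates generators from involutions such as $w_0(J)=\prod_{t\in J}t$ for cliques $J$ of size $\geqslant 2$ in \emph{every} right-angled group, not merely in examples. Your own observation about $(\mathbb{Z}/2\mathbb{Z})^2$ shows that no absolute characterization of $S$ inside $W$ can exist, so the argument must instead establish that any two sets of involutions generating $W$ subject to a right-angled presentation have isomorphic graphs; your write-up wavers between this correct formulation and the impossible absolute one. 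Since you end by deferring precisely this step to Bahls, the proposal is an accurate road map to the cited theorem rather than a proof of it --- which, to be fair, leaves it no further from a complete argument than the paper itself, which supplies only the citation.
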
 By Theorem \ref{rigidità}, in the right-angled case one can speak about the Bruhat order of the group $W$ and the Hecke algebra of the group $W$, without any
specification of its Coxeter presentation.

\section{Some properties of the projections $P^J$} \label{2}

Given a Coxeter system $(W,S)$ let $V_W$ be the free $\mathbb{Z}$-module with basis the set $W$.
Any projection $P^I:W\rightarrow W^I$ extends to an idempotent endomorphism $P^I \in \End(V_W)$;  in the sequel we will not distinguish between functions
 from $W$ to $W$ and endomorphisms of $V_W$.

\begin{dfn}
 Given a Coxeter system $(W,S)$, we define the $\mathbb{Z}$-algebra $M(W,S)$ as the subalgebra of $\End(V_W)$ generated by the set of idempotents $\{P^I:I \subseteq S\}$.
\end{dfn}

By the regressivity of the projections $P^J$,
in the finite case any linear extension of the Bruhat order on $W$ furnishes a representation of the algebra $M(W,S)$, made of triangular matrices with spectrum lying in $\{0,1\}$ and identity
given by $P^{\varnothing}$.
The algebra $M(W,S)$ is a subalgebra of the monoid algebra of regressive order preserving functions (see \cite[Section~2.5]{denton1}).

For $I,J \subseteq S$ we use the notation $[I,J]=0$ if $m(s,t)\in \{1,2\}$ for all $s\in I$, $t\in J$, where $m$ is the Coxeter matrix of $(W,S)$.
Otherwise we write $[I,J]\neq 0$. We use the same notation $[\cdot,\cdot]$ for the Lie bracket on the algebra $\End(V_W)$.

The next lemma is useful to prove some properties of the endomorphisms $P^J$ and to characterize the projections commuting on a finite group.
\begin{lem} \label{lemmaPPw0}
  Let $(W,S)$ be a Coxeter system and $w \in W$. Then
  $$P^IP^Jw=w^{I\cup J}P^IP^Jw_{I\cup J},$$ for all $I,J\subseteq S$.
\end{lem}
\begin{proof}
Since $w=w^Jw_J=w^{I\cup J}w_{I\cup J}$, we have
\begin{eqnarray*}
  w &=& (w^J)^I(w^J)_Iw_J \\
   &=&  w^{I\cup J}((w_{I\cup J})^J)^I ((w_{I\cup J})^J)_I (w_{I\cup J})_J \\&=&w^{I\cup J}((w_{I\cup J})^J)^I ((w_{I\cup J})^J)_I w_J,
\end{eqnarray*} where we have used Lemma \ref{proj fascio 1 e 2} to obtain the last equality. It is clear that $w^{I\cup J}((w_{I\cup J})^J)^I\in W^I$,
so the result follows.
\end{proof}

In the following proposition we give necessary and sufficient conditions for projections to commute.

\begin{prop} \label{Pcommutanti}
Let $(W,S)$ be a Coxeter system and $I,J \subseteq S$ connected. The following are equivalent:
\begin{enumerate}
  \item $[P^I,P^J] = 0$;
  \item $[I,J]=0$ or $I \cap J \in \{I,J\}$.
\end{enumerate}

\end{prop}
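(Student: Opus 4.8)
The plan is to establish the two implications separately, preceded by a reduction that localises the whole question to the parabolic subgroup $W_{I\cup J}$. Setting $K=I\cup J$ and factoring each $w\in W$ as $w=w^K w_K$ with $w^K\in W^K$, $w_K\in W_K$ and $\ell(w)=\ell(w^K)+\ell(w_K)$, the length-additivity $\ell(w^K u)=\ell(w^K)+\ell(u)$ for $u\in W_K$ (\cite[Proposition 2.4.4]{BB}) shows that minimising $\ell(w^K u)$ over a coset $w_K W_A$ (with $A\in\{I,J\}$ and $W_A\subseteq W_K$) is controlled only by the $W_K$-part, so $P^A(w)=w^K\,P^A(w_K)$; moreover the left factor $w^K$ survives a second projection. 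Hence $[P^I,P^J](w)=w^K\,[P^I,P^J](w_K)$, and as left translation by $w^K$ is injective, $[P^I,P^J]=0$ on $W$ if and only if it vanishes on $W_{I\cup J}$. I would then carry out both directions inside $W_{I\cup J}$.

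For $(2)\Rightarrow(1)$ I would split along the disjunction. If $I\subseteq J$, then Lemma \ref{proiezioniprefascio} gives $P^J\circ P^I=P^J$, while $W^J\subseteq W^I$ forces $P^I$ to fix the image $W^J$ of $P^J$, so $P^I\circ P^J=P^J=P^J\circ P^I$; the case $J\subseteq I$ is symmetric. If instead $[I,J]=0$, connectedness of $I$ and $J$ leaves only two possibilities: either $I=J$, which is trivial, or $I\cap J=\varnothing$, in which case every generator of $I$ commutes with every generator of $J$ and $W_{I\cup J}=W_I\times W_J$ with additive length. Writing $w=w_I w_J$ in this direct product, a direct check gives $P^J(w)=w_I$ and then $P^I P^J(w)=e$; by symmetry $P^J P^I(w)=e$, so the projections commute.

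The real content is the contrapositive of $(1)\Rightarrow(2)$. Assuming $[I,J]\neq0$ and $I\cap J\notin\{I,J\}$, the sets $I\setminus J$ and $J\setminus I$ are nonempty and there is at least one edge of the Coxeter graph joining $I$ to $J$, so $I\cup J$ is connected. I would choose a shortest path $z_0-z_1-\cdots-z_p$ in the Coxeter graph of $W_{I\cup J}$ with $z_0\in I\setminus J$ and $z_p\in J\setminus I$. Minimality puts every interior vertex in $I\cap J$ and makes the path \emph{induced}: $m(z_i,z_{i+1})\geq3$ while $m(z_i,z_j)=2$ for $|i-j|\geq2$. For the test element $w=z_0 z_1\cdots z_p$, the induced structure gives that $w$ is reduced and $D_R(w)=\{z_p\}$, because among the reduced words of $w$ (all related by commutation moves, since distinct letters admit no braid move) only $z_p$ can be moved to the end. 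Since $z_p\in J\setminus I$, the element $w$ has no right descent in $I$, hence $P^I(w)=w$; peeling the $J$-descents $z_p,z_{p-1},\dots,z_1$ successively gives $P^J(w)=z_0$. Therefore $P^J P^I(w)=P^J(w)=z_0$ whereas $P^I P^J(w)=P^I(z_0)=e$, and $z_0\neq e$ shows $[P^I,P^J]\neq0$.

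I expect the main obstacle to be the bookkeeping in this last step: verifying that $w$ is reduced with $D_R(w)=\{z_p\}$ and that the two projections consume the chain from opposite ends. This depends entirely on the induced-path property of a shortest path—consecutive generators not commuting, non-consecutive ones commuting—so the decisive preliminary is the graph-theoretic extraction of such a path from the hypotheses using the connectedness of $I$ and $J$; once the path is in hand, the descent computation follows from repeated use of the exchange condition and the fact that right descents lie in the support of an element.
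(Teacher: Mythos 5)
Your proof is correct and follows essentially the same route as the paper: the decisive step in both is the test element obtained from a shortest path in the Coxeter graph joining $I\setminus J$ to $J\setminus I$ through $I\cap J$ (the paper's $ts_k\cdots s_1s$ is your $z_0z_1\cdots z_p$ read backwards), on which the two compositions $P^IP^J$ and $P^JP^I$ return $e$ and a nonidentity generator respectively. The only divergence is in the case $[I,J]=0$, $I\cap J=\varnothing$, where you localise to $W_{I\cup J}=W_I\times W_J$ via the identity $P^A(w)=w^{I\cup J}\,P^A(w_{I\cup J})$ and compute directly, while the paper argues by a minimal counterexample using Lemma \ref{lemma xy}; both arguments are valid.
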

\begin{proof}

If $I \subseteq J$, then $W^J\subseteq W^I$ so $P^IP^J=P^J$; moreover, by Lemma \ref{proj fascio 1 e 2}, we have $P^JP^I=P^J$. Therefore
$[P^I,P^J] = 0$.

Let $[I,J]=0$; then $I=J=\{s\}$ or $I\cap J=\varnothing$, by the connectedness of $I$ and $J$. In the first case the result is obvious.
Let us consider the second case. For $w\in W$ we have $w_{I\cup J} = w_Iw_J=w_Jw_I$ and then, by Lemma \ref{lemmaPPw0},
$P^IP^Jw=P^JP^Iw=w^{I\cup J}$, i.e. $P^IP^J=P^JP^I=P^{I\cup J}$.
%

Now let $[P^I,P^J] = 0$ and $[I,J]\neq 0$. If $I \cap J \not \in \{I,J\}$ let $s \in I \setminus J$ and $t\in J \setminus I$ be such that $[\{s\},J] \neq 0$ and
        $[\{t\},I] \neq 0$. By connectedness there exists
          a path  $s,s_1,s_2,...,s_k,t$ of minimal length in the Coxeter graph of $(W,S)$ connecting $s$ and $t$ such that $s_1,...,s_k\in I\cap J$. Then $P^JP^Its_ks_{k-1}
          \cdots s_1s=P^Jt=e$ and $P^IP^Jts_ks_{k-1} \cdots s_1s=P^Its_ks_{k-1} \cdots s_1s=t$, i.e. $[P^I,P^J]\neq 0$. Hence we conclude that $I \cap J \in \{I,J\}$.

\end{proof}

Let $I\subseteq S$; we say that a projection $P^I \in M(W,S)$ is \emph{connected} if $I$ is connected. In the next proposition we show how any projection factorizes as a
product of connected projections.

\begin{prop} \label{corolPprodotto}
  Let $I = \biguplus\limits_{i=1}^n I_i$ be a partition of $I\subseteq S$ by maximal connected sets.
  Then $P^I=P^{I_1}P^{I_2}\cdots P^{I_n}$.
\end{prop}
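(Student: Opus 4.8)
The plan is to show that the two endomorphisms $P^I$ and $\Pi:=P^{I_1}P^{I_2}\cdots P^{I_n}$ agree on every $w\in W$, by squeezing $\Pi(w)$ between $w^I$ and itself in the Bruhat order. The structural input is that, since the $I_i$ are the \emph{maximal} connected components of $I$, there are no edges of the Coxeter graph between distinct $I_i$ and $I_j$; hence $m(s,t)=2$ for $s\in I_i$, $t\in I_j$ with $i\neq j$, so that $[I_i,I_j]=0$ and the elements of $W_{I_i}$ commute with those of $W_{I_j}$.

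First I would isolate the one place where commutativity is really used, as a short lemma: if $A,B\subseteq S$ are disjoint with $[A,B]=0$, then $P^A(W^B)\subseteq W^B$, i.e.\ the projection onto $W^A$ preserves the quotient $W^B$. To prove it, take $v\in W^B$ and write $v=v^Av_A$ with $v^A=P^A(v)\in W^A$, $v_A\in W_A$ and $\ell(v)=\ell(v^A)+\ell(v_A)$. If some $s\in B$ were a right descent of $v^A$, then, since $s$ commutes with $v_A\in W_A$, one has $vs=(v^As)v_A$, whence $\ell(vs)\leq \ell(v^As)+\ell(v_A)=\ell(v)-1$, contradicting $\ell(vs)>\ell(v)$, which holds because $v\in W^B$ and $s\in B$. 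Thus $D_R(v^A)\cap B=\varnothing$, that is $v^A\in W^B$.

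With this lemma in hand the two Bruhat inequalities become formal. For $\Pi(w)\leq w^I$, a descending induction on $k$ shows $P^{I_k}\cdots P^{I_n}(w)\in W^{I_k}\cap\cdots\cap W^{I_n}$: the base $k=n$ is clear, and the step from $k$ to $k-1$ is exactly an application of the lemma with $A=I_{k-1}$ and $B=I_\ell$ for each $\ell\geq k$ (legitimate since $[I_{k-1},I_\ell]=0$). Hence $\Pi(w)\in\bigcap_i W^{I_i}=W^{I}$; since moreover $\Pi(w)\leq w$ (each $P^{I_i}$ is regressive) and every $v\in W^I$ with $v\leq w$ satisfies $v=v^I\leq w^I$ by Proposition \ref{ordinepreservato}, we conclude $\Pi(w)\leq w^I=P^I(w)$. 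For the reverse inequality $w^I\leq\Pi(w)$, note that $w^I\in W^I\subseteq W^{I_i}$ gives $(w^I)^{I_i}=w^I$ for every $i$; starting from the regressivity relation $w^I\leq w$ and applying $P^{I_n}$, then $P^{I_{n-1}}$, and so on down to $P^{I_1}$, Proposition \ref{ordinepreservato} together with these identities propagates the inequality step by step to $w^I\leq P^{I_1}\cdots P^{I_n}(w)=\Pi(w)$.

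Combining the two inequalities yields $\Pi(w)=w^I=P^I(w)$ for all $w\in W$, hence $P^I=P^{I_1}\cdots P^{I_n}$. The main obstacle is really just the key lemma: once one knows that $P^{I_{k-1}}$ cannot create a new right descent lying in any of the later components $I_\ell$, everything else is bookkeeping with the order-preserving and regressive properties already recorded around Proposition \ref{ordinepreservato}. The one subtlety to watch is that the forward induction must track membership in \emph{all} the remaining quotients $W^{I_\ell}$ simultaneously, rather than one factor at a time.
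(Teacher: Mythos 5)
Your proof is correct, but it takes a different route from the paper's. The paper argues by direct computation: it writes $u=u^Iu_I$, uses the commutation $[I_i,I_j]=0$ to split $u_I=u_{I_1}u_{I_2}\cdots u_{I_n}$, and then lets $P^{I_n},\dots,P^{I_1}$ strip off these factors one at a time, the (implicit) point being that at each stage $u^Iu_{I_1}\cdots u_{I_{k-1}}$ is the canonical $W^{I_k}$-component of $u^Iu_{I_1}\cdots u_{I_k}$. You instead never touch the factorization of $u_I$: you isolate the lemma that $P^A$ preserves $W^B$ when $A,B$ are disjoint with $[A,B]=0$, use it in a descending induction to get $\Pi(w)\in\bigcap_i W^{I_i}=W^I$, and then squeeze $\Pi(w)$ against $w^I$ from both sides using regressivity and Proposition \ref{ordinepreservato}. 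What your version buys is that every step is explicitly justified --- in particular the descent-theoretic content that the paper's one-line computation leaves to the reader (namely, why applying $P^{I_k}$ does not disturb membership in the later quotients $W^{I_\ell}$) is exactly your key lemma. What the paper's version buys is brevity and an explicit formula for the image, since it exhibits $\Pi(u)$ directly as $u^I$ rather than inferring equality from two inequalities. Both arguments rest on the same structural fact, the elementwise commutation of $W_{I_i}$ and $W_{I_j}$ coming from maximality of the connected components, and your correctly flagged subtlety (tracking all remaining quotients simultaneously) is the honest version of what the paper's telescoping computation assumes.
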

\begin{proof}
  By hypothesis $[P^{I_i},P^{I_j}]=0$ for all $i,j\in [n]$; hence $P^I=P^{I_1 \uplus \ldots \uplus I_n}=P^{I_1}P^{I_2}\cdots P^{I_n}$, as in the proof of Proposition \ref{Pcommutanti}.
\end{proof}

By Proposition \ref{corolPprodotto} the algebra $M(W,S)$ is generated by the connected projections.
The following result concerns the general case of projections $P^I$ and $P^J$ when $I$ and $J$ are possibly not connected.

\begin{prop} \label{propNotconn}
  Let $I = \biguplus \limits_{i=1}^m I_i$ and $J = \biguplus \limits_{i=1}^n J_i$ be partitions of $I$ and $J$ by maximal connected sets. Then $[P^I,P^J]=0$ if and only if
  $[P^{I_i},P^{J_j}]=0$ for all $i\in [m]$, $j\in [n]$.
\end{prop}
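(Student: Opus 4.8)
The plan is to deduce everything from the factorizations $P^I=P^{I_1}\cdots P^{I_m}$ and $P^J=P^{J_1}\cdots P^{J_n}$ provided by Proposition \ref{corolPprodotto}, keeping in mind that projections attached to distinct maximal connected components commute automatically. For the implication ($\Leftarrow$) I would first note that the whole family $\{P^{I_i}\}_{i\in[m]}\cup\{P^{J_j}\}_{j\in[n]}$ consists of pairwise commuting idempotents: for $i\neq i'$ one has $[I_i,I_{i'}]=0$ by maximality, so $P^{I_i}$ and $P^{I_{i'}}$ commute by Proposition \ref{Pcommutanti}, and likewise for the $P^{J_j}$, while the mixed pairs commute by hypothesis. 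Since all factors commute, $P^IP^J=P^{I_1}\cdots P^{I_m}P^{J_1}\cdots P^{J_n}=P^{J_1}\cdots P^{J_n}P^{I_1}\cdots P^{I_m}=P^JP^I$.

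For ($\Rightarrow$) I would argue by contraposition: assume $[P^{I_{i_0}},P^{J_{j_0}}]\neq 0$ for some $i_0,j_0$ and exhibit an element on which $P^I$ and $P^J$ disagree. By Proposition \ref{Pcommutanti} we have $[I_{i_0},J_{j_0}]\neq 0$ with neither of $I_{i_0},J_{j_0}$ containing the other, so the proof of that proposition already produces an explicit element $u=ts_k\cdots s_1s$, where $s\in I_{i_0}\setminus J_{j_0}$ is non-commuting with $J_{j_0}$, the letters $s_1,\dots,s_k$ lie in $I_{i_0}\cap J_{j_0}$, and $t\in J_{j_0}\setminus I_{i_0}$ is non-commuting with $I_{i_0}$; that proof also records $P^{J_{j_0}}u=u$, $P^{I_{i_0}}u=t$ and $P^{J_{j_0}}t=e$. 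The entire reuse of this witness hinges on controlling the support $\mathrm{supp}(u)$, i.e. the set of generators occurring in $u$, which is contained in $\{s,s_1,\dots,s_k,t\}$.

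The key point I would isolate is that the endpoints lie outside the \emph{entire} sets $J$ and $I$, not merely outside $J_{j_0}$ and $I_{i_0}$: if $s\in I_{i_0}\setminus J_{j_0}$ is non-commuting with some $b\in J_{j_0}$ and $s$ belonged to $J$, then $s$ and $b$ would lie in a common connected component of $J$, forcing $s\in J_{j_0}$, a contradiction; hence $s\notin J$, and symmetrically $t\notin I$. It follows that $I_i\cap\mathrm{supp}(u)=\varnothing$ for every $i\neq i_0$ (the letters $s,s_1,\dots,s_k$ lie in $I_{i_0}$ and $t\notin I$) and $J_j\cap\mathrm{supp}(u)=\varnothing$ for every $j\neq j_0$ (the letters $t,s_1,\dots,s_k$ lie in $J_{j_0}$ and $s\notin J$). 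Since $P^K$ fixes any $w$ with $K\cap D_R(w)=\varnothing$ and $D_R(w)\subseteq\mathrm{supp}(w)$, each factor $P^{I_i}$ ($i\neq i_0$) and $P^{J_j}$ ($j\neq j_0$) fixes $u$, and each $P^{J_j}$ ($j\neq j_0$) also fixes $t$. Using these fixings together with the commutativity of like-sided factors, I would compute $P^IP^Ju=P^{I_{i_0}}P^{J_{j_0}}u=P^{I_{i_0}}u=t$ while $P^JP^Iu=P^{J_{j_0}}P^{I_{i_0}}u=P^{J_{j_0}}t=e$; as $t\neq e$, this gives $[P^I,P^J]\neq 0$.

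I expect the main obstacle to be precisely this support bookkeeping, and within it the graph-theoretic observation that the non-commuting endpoints of the connecting path automatically fall outside all of $J$ and all of $I$. Once that is secured, the remaining components $I_i$ and $J_j$ are invisible to $u$, the extra projections act as the identity on the relevant elements, and the computation collapses to the connected case already handled in Proposition \ref{Pcommutanti}.
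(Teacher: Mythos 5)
Your proposal is correct and follows essentially the same route as the paper: the backward implication via pairwise commutativity of all the connected factors, and the forward implication via the witness $ts_k\cdots s_1s$ from the proof of Proposition \ref{Pcommutanti}, together with the observation that maximality of the connected components forces $s\notin J$, $t\notin I$ and $s_1,\dots,s_k\notin (I\cup J)\setminus(I_i\cap J_j)$, so that all other factors act trivially. Your support-based bookkeeping is just a slightly more explicit phrasing of the paper's computation.
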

\begin{proof}
  One implication is obvious. So let $[P^I,P^J]=0$ and $i\in [m]$, $j\in [n]$ be such that $[P^{I_i},P^{J_j}]\neq0$.
  Then, by Proposition \ref{Pcommutanti}, $[I_i,J_j] \neq 0$ and $I_i\cap J_j \not \in \{I_i,J_j\}$. Let $s\in I_i \setminus J_j$ and
  $t\in J_j \setminus I_i$ so that $[\{s\},J_j]\neq 0$ and $[\{t\},I_i]\neq 0$.
  Consider a path  $s,s_1,s_2,...,s_k,t$ of minimal length in the Coxeter graph of $(W,S)$ connecting $s$ and $t$ such that $s_1,...,s_k\in I_i\cap J_j$. Therefore $s \not
  \in J \setminus J_j$, $t\not \in I\setminus I_i$ and $s_1,...,s_k \not \in (I\cup J)\setminus (I_i \cap J_j)$, since
the sets $\{I_1,...,I_m\}$ and $\{J_1,...,J_n\}$ are partitions made by maximal connected subsets of $I$ and $J$ respectively. By Proposition \ref{corolPprodotto} we obtain
  \begin{eqnarray*}
    P^IP^Jts_ks_{k-1}...s_1s &=& P^{I_1}P^{I_2}\cdots P^{I_m}P^{J}ts_ks_{k-1}...s_1s \\
    &=& P^{I_1}P^{I_2}\cdots P^{I_m}ts_ks_{k-1}...s_1s \\
    &=& P^{I_i}ts_ks_{k-1}...s_1s = t,
  \end{eqnarray*}
  and
  $$P^JP^Its_ks_{k-1}...s_1s = P^Jt =e,$$ which is a contradiction.
\end{proof}

Now we characterize the projections commuting on a finite group, testing their commutativity on the maximum of the group.
\begin{prop} \label{prop PPw0}
  Let $(W,S)$ be a Coxeter system such that $|W|<\infty$. Then $$[P^I,P^J]=0 \Leftrightarrow [P^I,P^J]w_0=0,$$ for all $I,J\subseteq S$.
\end{prop}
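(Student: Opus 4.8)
The plan is to prove the nontrivial implication ($\Leftarrow$) by contraposition, by extracting the non-commutativity witness already built in Propositions \ref{Pcommutanti} and \ref{propNotconn} and propagating it up to the top element $w_0$. The forward implication ($\Rightarrow$) is immediate: if $[P^I,P^J]=0$ as an endomorphism of $V_W$ it vanishes on every basis element, in particular on $w_0$.

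For the converse I would assume $[P^I,P^J]\neq 0$ and exhibit an element on which $P^IP^J$ and $P^JP^I$ disagree. By Proposition \ref{propNotconn} there are maximal connected components $I_i\subseteq I$ and $J_j\subseteq J$ with $[P^{I_i},P^{J_j}]\neq 0$, and the construction in the proof of Proposition \ref{Pcommutanti} furnishes $s\in I_i\setminus J_j$, $t\in J_j\setminus I_i$ and a minimal path $s,s_1,\dots,s_k,t$ in the Coxeter graph with $s_1,\dots,s_k\in I_i\cap J_j$, such that the element $u:=ts_k\cdots s_1 s$ satisfies $P^IP^Ju=t\neq e$. The key observation is that every letter of $u$ lies in $I\cup J$, so $u\in W_{I\cup J}$ and hence $u\leqslant w_0(I\cup J)$, the longest element of the finite parabolic subgroup $W_{I\cup J}$.

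Next I would evaluate the two orders at $w_0(I\cup J)$. Since $P^IP^J$ preserves the Bruhat order (Proposition \ref{ordinepreservato}), from $u\leqslant w_0(I\cup J)$ we get $t=P^IP^Ju\leqslant P^IP^Jw_0(I\cup J)$, so $P^IP^Jw_0(I\cup J)\neq e$. Now both $P^IP^Jw_0(I\cup J)$ and $P^JP^Iw_0(I\cup J)$ lie in $W_{I\cup J}$ (the projections map $W_{I\cup J}$ into itself), the former in $W^I$ and the latter in $W^J$. Were they equal, their common value would belong to $W^I\cap W^J\cap W_{I\cup J}=W^{I\cup J}\cap W_{I\cup J}=\{e\}$, because a nonidentity element of $W_{I\cup J}$ has a right descent in $I\cup J$ and so cannot lie in $W^{I\cup J}$; this contradicts the previous inequality. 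Hence $[P^I,P^J]w_0(I\cup J)\neq 0$.

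Finally I would transport this back to $w_0$ via Lemma \ref{lemmaPPw0}, which gives $P^IP^Jw_0=w_0^{I\cup J}\,P^IP^Jw_0(I\cup J)$ and, applied with $I$ and $J$ exchanged, $P^JP^Iw_0=w_0^{I\cup J}\,P^JP^Iw_0(I\cup J)$. Since left multiplication by the group element $w_0^{I\cup J}$ is injective, the inequality $P^IP^Jw_0(I\cup J)\neq P^JP^Iw_0(I\cup J)$ forces $P^IP^Jw_0\neq P^JP^Iw_0$, that is $[P^I,P^J]w_0\neq 0$, completing the contrapositive. I expect the main obstacle to be precisely this passage to the parabolic subgroup: testing at $w_0$ directly inside $W$ is too weak, because $w_0^{I\cup J}$ may dominate the witness $t$ and order preservation alone yields no contradiction; inside $W_{I\cup J}$, however, the obstruction $W^{I\cup J}\cap W_{I\cup J}$ collapses to $\{e\}$, which is what makes the order-preservation argument conclusive, and Lemma \ref{lemmaPPw0} is exactly the bridge that legitimizes the reduction.
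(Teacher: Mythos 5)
Your proof is correct, and it takes a genuinely different route from the paper's. The paper proves the hard implication directly: assuming $[P^I,P^J]w_0=0$ with $I,J$ connected, it first treats the case $I\cup J=S$ (where $W^{I\cup J}=\{e\}$ forces $P^IP^Jw_0=P^JP^Iw_0=e$) by a minimal-counterexample descent through several nested subcases, then reduces the case $I\cup J\neq S$ to the parabolic subgroup $W_{I\cup J}$ via Lemma \ref{lemmaPPw0}, and finally handles non-connected $I,J$ by a separate argument through Proposition \ref{propNotconn}. You instead argue the contrapositive with essentially no case analysis: the explicit witness $u=ts_k\cdots s_1s\in W_{I\cup J}$ already computed in the proofs of Propositions \ref{Pcommutanti} and \ref{propNotconn} satisfies $P^IP^Ju=t\neq e$; order preservation (Proposition \ref{ordinepreservato}) lifts this to $P^IP^Jw_0(I\cup J)\neq e$; and since both $P^IP^Jw_0(I\cup J)$ and $P^JP^Iw_0(I\cup J)$ lie in $W_{I\cup J}$ (here you correctly use that $P^K$ stabilizes $W_{I\cup J}$ for $K\subseteq I\cup J$), their agreement would force the common value into $W^I\cap W^J\cap W_{I\cup J}=W^{I\cup J}\cap W_{I\cup J}=\{e\}$, a contradiction; Lemma \ref{lemmaPPw0} then transports the disagreement from $w_0(I\cup J)$ to $w_0$ by injectivity of left multiplication, just as in the paper. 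Your monotonicity argument is shorter, handles connected and non-connected $I,J$ uniformly, and in passing establishes the equivalence with condition (1) of Theorem \ref{teoremaComm} in the same stroke; the paper's direct descent, by contrast, yields along the way the structural facts $w_0^J\in W_I$ and $w_0^I\in W_J$ in the case $P^IP^Jw_0=e$ with $I\cup J=S$, which your argument does not need and does not produce.
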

\begin{proof}
  One implication is trivial. So let $P^IP^Jw_0=P^JP^Iw_0$. 
  By Lemma \ref{lemmaPPw0} we deduce that $$P^IP^J w_0(I\cup J) = P^JP^Iw_0(I\cup J) \in W^{I \cup J} \cap W_{I\cup J} = \{e\}.$$

  Let $v\in W$. Then $v_{I\cup J} \leqslant w_0(I \cup J)$. Since the projections are order preserving, we obtain
  $P^JP^I v_{I\cup J} \leqslant P^JP^I w_0(I \cup J)=e$ and $P^IP^J v_{I\cup J} \leqslant P^IP^J w_0(I \cup J)=e$. The result follows again by Lemma \ref{lemmaPPw0}.

\end{proof}

Lemma \ref{lemmaPPw0} and Proposition \ref{prop PPw0} give some characterizations of commuting projections in the finite case. Let us resume these results.

\begin{thm} \label{teoremaComm}
 Let $(W,S)$ be a Coxeter system such that $|W|<\infty$. Then the following statements are equivalent:
  \begin{enumerate}
    \item  $[P^I,P^J]w_0(I\cup J)=0$;
    \item $[P^I,P^J]w_0=0$;
    \item $[P^I,P^J]=0$.
  \end{enumerate}
\end{thm}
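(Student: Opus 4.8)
The plan is to read the statement as a synthesis of the two preceding results and to close the cycle of implications with a single new observation drawn from Lemma \ref{lemmaPPw0}. The equivalence of (2) and (3) is precisely Proposition \ref{prop PPw0}, so nothing further is needed there. It therefore suffices to establish the equivalence of (1) and (2); the implication (3) $\Rightarrow$ (1) is in any case trivial, since a vanishing commutator in $\End(V_W)$ annihilates every vector, in particular $w_0(I\cup J)$.

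For the equivalence of (1) and (2) I would invoke Lemma \ref{lemmaPPw0} twice. Writing the lemma once for the product $P^IP^J$ and once for $P^JP^I$ (legitimate because $I\cup J = J\cup I$ and hence $w_0^{I\cup J} = w_0^{J\cup I}$), and subtracting, gives
\begin{equation*}
[P^I,P^J]w_0 = w_0^{I\cup J}\bigl([P^I,P^J]w_0(I\cup J)\bigr),
\end{equation*}
where $w_0^{I\cup J}(\,\cdot\,)$ denotes left multiplication by the fixed group element $w_0^{I\cup J}$. The right-hand side is thus the image, under left translation by $w_0^{I\cup J}$, of the vector $[P^I,P^J]w_0(I\cup J)\in V_W$.

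The key step is then to note that left multiplication by any fixed $g\in W$ permutes the basis $W$ of $V_W$ and so extends to a $\mathbb{Z}$-linear automorphism of $V_W$; in particular it is injective. Applying this to $g = w_0^{I\cup J}$ shows that $[P^I,P^J]w_0$ and $[P^I,P^J]w_0(I\cup J)$ vanish simultaneously, which is exactly the equivalence (1) $\Leftrightarrow$ (2). Chaining this with Proposition \ref{prop PPw0} completes the proof. I do not expect a genuine obstacle here: the substantive content has already been absorbed into Lemma \ref{lemmaPPw0} and Proposition \ref{prop PPw0}, and the only point demanding care is the harmless remark that left translation by a group element is injective on the free module, which is what licenses cancelling the factor $w_0^{I\cup J}$.
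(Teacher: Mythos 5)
Your proof is correct and follows essentially the same route as the paper, which offers no separate argument for this theorem but presents it as a summary of Lemma \ref{lemmaPPw0} (giving (1) $\Leftrightarrow$ (2)) and Proposition \ref{prop PPw0} (giving (2) $\Leftrightarrow$ (3)). The only thing you add is to make explicit the cancellation of the left factor $w_0^{I\cup J}$ via injectivity of left translation on $V_W$, a step the paper uses implicitly (in both directions) inside the proof of Proposition \ref{prop PPw0}.
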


We give an example, referring to \cite[Section~2.4]{BB} for the action of the projection $P^J$ on a permutation written in one line notation.
\begin{ex}
  Let $(S_6,\{s_1,s_2,s_3,s_4,s_5\})$ be the Coxeter system of type $A_5$ realized by the symmetric group $S_6$, generated by the simple transpositions
  $$\{s_1,s_2,s_3,s_4,s_5\}=\{(1,2),(2,3),(3,4),(4,5),(5,6)\}.$$ In one line notation, we have that the element of maximal length in $S_6$ is $w_0=654321$.

  The projection $P^{\{s_1,s_2\}}$ acts on a permutation $\sigma$, written in one line notation, by reordering the set $\{\sigma(1),\sigma(2),\sigma(3)\}$
  and $P^{\{s_2,s_3\}}$ acts by reordering the set $\{\sigma(2),\sigma(3),\sigma(4)\}$.
  Therefore
  $$P^{\{s_2,s_3\}}P^{\{s_1,s_2\}}(654321)=P^{\{s_2,s_3\}}(456321)=435621 $$ and  $$P^{\{s_1,s_2\}}P^{\{s_2,s_3\}}(654321)=P^{\{s_1,s_2\}}(634521)=346521.$$

  The projection $P^{\{s_1,s_5\}}$ acts on a permutation $\sigma$ by reordering the sets $\{\sigma(1),\sigma(2)\}$ and $\{\sigma(5),\sigma(6)\}$
  and $P^{\{s_3,s_5\}}$ acts by reordering the sets $\{\sigma(3),\sigma(4)\}$ and $\{\sigma(5),\sigma(6)\}$.
  Therefore
  $$P^{\{s_3,s_5\}}P^{\{s_1,s_5\}}(654321)=P^{\{s_3,s_5\}}(564312)=563412$$ and  $$P^{\{s_1,s_5\}}P^{\{s_3,s_5\}}(654321)=P^{\{s_1,s_5\}}(653412)=563412.$$
\end{ex}

We end this section by defining a graph useful for further developments. Given a Coxeter system $(W,S)$
we let $G_2(W,S)$ to be the \emph{non-commutation graph} of the set of idempotents $\{P^J: J \subseteq S\} \setminus \{\I, P^S\}$;
then its set of vertices is $\mathcal{P}(S) \setminus \{\varnothing, S\}$  and $\{I,J\}$ is and edge if and only if $[P^I,P^J]\neq 0$.
We let $G(W,S)$ to be the graph $G_2(W,S)$ with labeled edges. A label $m(I,J)$ is defined by

$$m(I,J):=\left\{
    \begin{array}{ll}
      2n+1, & \hbox{if $(P^IP^J)^n \neq (P^JP^I)^n$} \\
          & \hbox{and $P^J(P^IP^J)^n = P^I(P^JP^I)^n$;} \\
      2n+2, & \hbox{if $P^J(P^IP^J)^n \neq P^I(P^JP^I)^n$} \\
            & \hbox{and $(P^IP^J)^{n+1}=(P^JP^I)^{n+1}$;} \\
      \infty,  & \hbox{otherwise,}
    \end{array}
  \right.$$ for all edges $\{I,J\}$. As for Coxeter graphs, we drop the label $m(I,J)=3$.
So, for example, the graph $G(S_4,[3])$ is the one of Figure \ref{fig:GA3}, where $(S_4,[3])$ is the symmetric group of order $24$ with
its standard Coxeter presentation. By \cite[Theorem~4.1]{sentinelli-right} the graph $G(S_{n+1},[n])$ is $n$-universal for forests;  \cite[Conjecture~4.5]{sentinelli-right}
asserts that it is $n$-universal.

\begin{figure}
\begin{center}
\includegraphics[scale=0.5]{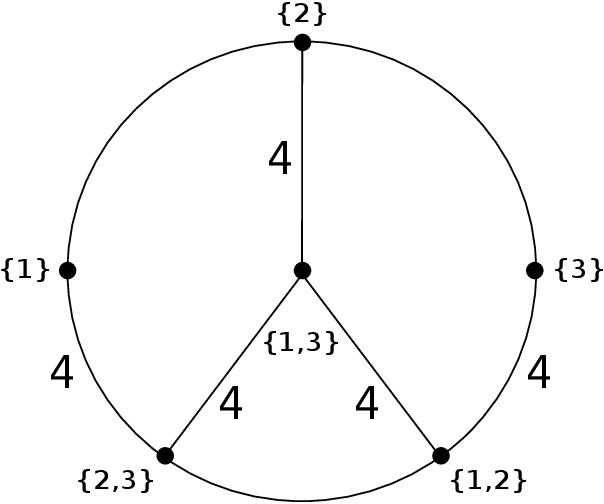}
\caption{$G(S_4,[3])$}
\label{fig:GA3}
\end{center}
\end{figure}

\section{A representation of the Coxeter monoid algebra} \label{3}

Given a Coxeter system $(W,S)$ let us denote with $M_0(W,S)$ the
subalgebra of $M(W,S)$ with identity generated by the set of idempotents $\{P^{\{s\}} \in \End(V_W): s\in S\}$.
In this section we prove that the algebra $M_0(W,S)$ is isomorphic to the Coxeter monoid $\mathbb{Z}$-algebra in the finite and infinite case and that $M_0(W,S)=M(W,S)$ if
$|W|<\infty$. These facts are known in the finite case (see, e.g. \cite{biHecke}). In the infinite case $M_0(W,S)$ is a proper subalgebra of $M(W,S)$.
The representation theory of the Coxeter monoid algebra over a field in the finite case was firstly studied in \cite{Norton}.
Some results of this section, in the finite case, could be deduced from the general theory exposed in the cited paper and in
more recent ones (see, e.g., \cite{denton1} and \cite{biHecke}). To pursue homogeneity and generality we will prove  all the results we need in our setting and notation.

As a consequence of the $\mathbb{Z}$-algebra isomorphism $M_0(W,S) \simeq \mathbb{Z}[W^M]$ we obtain that the Coxeter monoid algebra $\mathbb{Z}[W^M]$ of any type injects in
the incidence algebra $I(W;\mathbb{Z})$ of the Bruhat order. We also define a family of $\mathbb{Z}[W^M]$-modules which gives, in the right-angled case, a family of
finite-dimensional representations of the Artin group $W^A$, as it is shown in the next section.

In order to prove the announced isomorphism, we need the following lemma, which states that the idempotents $\{P^{\{s\}} \in \End(V_W): s\in S\}$ satisfy the relations
encoded in the Coxeter matrix of $(W,S)$.
\begin{lem} \label{lemma st}
  Let $(W,S)$ be a Coxeter system with Coxeter matrix $m$. Then the label of the edge $\{\{s\},\{t\}\}$ in the graph $G(W,S)$ is $m(s,t)$, for all $s,t\in S$.
\end{lem}
\begin{proof}
  Let $m(s,t)$ be even. Then, since $(st)^{m(s,t)/2} = (ts)^{m(s,t)/2}$, we obtain
  \begin{eqnarray*}
    (P^{\{s\}}P^{\{t\}})^k(st)^{m(s,t)/2} &=& (st)^{m(s,t)/2-k}  \\
   &\neq& s(ts)^{m(s,t)/2-k}=(P^{\{t\}}P^{\{s\}})^k(st)^{m(s,t)/2},
  \end{eqnarray*} for all $k<m(s,t)/2$.
  The odd case is analogous. These computations also prove the result if $m(s,t)=\infty$.
  Now let $m(s,t)$ be even and $w\in W$. Then $w=w^{\{s,t\}}w_{\{s,t\}}$ and so $(P^{\{s\}}P^{\{t\}})^{m(s,t)/2}w=w^{\{s,t\}} = (P^{\{t\}}P^{\{s\}})^{m(s,t)/2}w$.
  In the odd case we proceed in the same manner. Therefore in all cases $m(s,t)$ is the label of the edge $\{\{s\},\{t\}\}$, as defined at the end of the previous section.
\end{proof}

 Given $u \in W$ with reduced expression $s_1s_2 \cdots s_k$, let us define
\begin{eqnarray*}
  P^e &:=& \I, \\
  P^u &:=& P^{\{s_1\}}P^{\{s_2\}} \cdots P^{\{s_k\}}.
\end{eqnarray*}

Since a Coxeter group $W$ has the word property, i.e. any two reduced words for $u \in W$ can be connected via a sequence of braid-moves (see, e.g. \cite[Theorem~3.3.1]{BB}),
by Lemma \ref{lemma st} the endomorphism $P^u$ is well defined for all $u\in W$. Notice that such endomorphisms realize endomorphisms of the poset $(W,\leqslant)$, since they
are composition of poset endomorphisms  (see Proposition \ref{ordinepreservato}).

\begin{prop} \label{lemmaPu}
  Let $u,v \in W$. Then $P^vu=e$ if and only if $u\leqslant v$.
\end{prop}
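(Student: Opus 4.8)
The plan is to prove both directions at once by induction on $\ell(v)$. Since $P^v$ is independent of the chosen reduced expression (Lemma~\ref{lemma st}), for $v\neq e$ I may fix any $s\in D_R(v)$ and write $v=v''s$ with $\ell(v'')=\ell(v)-1$; note $v''\in W^{\{s\}}$ because $v''s>v''$. From the definition one has $P^v=P^{v''}\circ P^{\{s\}}$, hence $P^vu=P^{v''}\bigl(u^{\{s\}}\bigr)$ for every $u\in W$, where $u^{\{s\}}:=P^{\{s\}}(u)$. Applying the inductive hypothesis to $v''$ gives $P^vu=e\iff u^{\{s\}}\leqslant v''$, so the whole proposition follows once I establish the key lemma: for $s\in S$, $v''\in W^{\{s\}}$ and $u\in W$,
\[
u^{\{s\}}\leqslant v''\iff u\leqslant v''s.
\]

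To prove this key lemma I would split on whether $s\in D_R(u)$. If $s\in D_R(u)$, then $u^{\{s\}}=us$ and $s\in D_R(v''s)\setminus D_R(us)$, since $(us)s=u>us$ while $(v''s)s=v''<v''s$. Both implications then reduce to the lifting property (Proposition~\ref{sollevamento}) applied to the pair $us<v''s$: from $u\leqslant v''s$ one gets $us<v''s$ (as $us<u$), whence $us\leqslant (v''s)s=v''$; conversely from $us\leqslant v''$ one has $us\leqslant v''<v''s$, whence $u=(us)s\leqslant v''s$. If instead $s\notin D_R(u)$, then $u^{\{s\}}=u$ and the claim becomes $u\leqslant v''\iff u\leqslant v''s$. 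Here $\Rightarrow$ is immediate from $v''<v''s$, and for $\Leftarrow$ I note $u\neq v''s$ (as $s\in D_R(v''s)\setminus D_R(u)$), so $u<v''s$, and the lifting property applied to the pair $u<v''s$ yields $u\leqslant (v''s)s=v''$.

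Finally I would assemble the induction: the base case $v=e$ is immediate, since $P^e=\I$ gives $P^eu=u$, which equals $e$ exactly when $u=e$, i.e. $u\leqslant e$. The inductive step is precisely the chain $P^vu=e\iff u^{\{s\}}\leqslant v''\iff u\leqslant v''s=v$ obtained above. I expect no serious obstacle; the only point requiring care is to peel the generator from the \emph{right}, so that $P^{\{s\}}$ acts first and directly on $u$, since then the single-step effect of $P^{\{s\}}$ is governed cleanly by the lifting property. Peeling from the left would instead force an analysis of the whole fibre $\{e,s\}$ of the remaining operator and is far less direct. The heart of the argument is thus the first case of the key lemma, where $s$ is already a right descent of $u$.
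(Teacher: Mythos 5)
Your proof is correct and follows essentially the same route as the paper's: induction on $\ell(v)$, peeling a right descent $s$ of $v$ so that $P^v=P^{vs}\circ P^{\{s\}}$, splitting on whether $s\in D_R(u)$, and invoking the lifting property. The only difference is that you spell out the two lifting-property deductions ($us\leqslant vs\iff u\leqslant v$ and $u\leqslant vs\iff u\leqslant v$) that the paper states without detail.
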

\begin{proof}
  We prove the result by induction on $\ell(v)$. If $\ell(v)=0$ then $P^v=\I$ and so the result is obvious.
Let $\ell(v)>0$ and $s\in D_R(v)$. By Lemma \ref{lemma st} we can write $P^v=P^{vs}P^s$. There are two cases to consider:
\begin{enumerate}
  \item $s\in D_R(u)$: in this case $P^su=us$. Therefore $P^vu=P^{vs}us$ and by the inductive hypothesis $P^{vs}us=e$ if and only if $us\leqslant vs$.
But $us\leqslant vs$ if and only if $u\leqslant v$.
  \item $s \not \in D_R(u)$: in this case $P^su=u$. Hence $P^vu=P^{vs}u$ and by the inductive hypothesis $P^{vs}u=e$
if and only if $u\leqslant vs$. By the lifting property (Proposition \ref{sollevamento}) we have that $u\leqslant vs$ if and only if $u\leqslant v$.
\end{enumerate}
\end{proof}

By the next corollary we can deduce that for finite Coxeter systems the algebra $M(W,S)$ is generated by the idempotents $P^{\{s\}}$.
\begin{cor}
  Let $(W,S)$ be a Coxeter system, $J\subseteq S$ and $|W_J|<\infty$. Then $$P^{w_0(J)} = P^J.$$ In particular,
  $|W|<\infty$ implies $M(W,S)=M_0(W,S)$.
\end{cor}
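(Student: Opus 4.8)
The plan is to prove $P^{w_0(J)}=P^J$ by a sandwiching argument: I would establish the two identities
$$P^J\circ P^{w_0(J)}=P^{w_0(J)} \qquad\text{and}\qquad P^J\circ P^{w_0(J)}=P^J,$$
which together force $P^{w_0(J)}=P^J$. The second one is the routine identity. Fixing a reduced word $w_0(J)=s_1s_2\cdots s_k$ (necessarily with all $s_i\in J$, since $w_0(J)\in W_J$), the definition gives $P^{w_0(J)}=P^{\{s_1\}}P^{\{s_2\}}\cdots P^{\{s_k\}}$. Because $\{s_i\}\subseteq J$, Lemma \ref{proiezioniprefascio} yields $P^J\circ P^{\{s_i\}}=P^J$ for each $i$, and peeling the factors off from the left one at a time collapses the product to $P^J$; this proves $P^J\circ P^{w_0(J)}=P^J$.

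The first identity is the real content, and it is equivalent to the claim that $\mathrm{Im}(P^{w_0(J)})\subseteq W^J$, since $P^J$ fixes $W^J$ pointwise. Here the hard part is that this containment is invisible from a single reduced word: the leftmost (last-applied) factor of $P^{w_0(J)}=P^{\{s_1\}}\circ(\cdots)$ has image exactly $W^{\{s_1\}}$, so from one word we only learn $\mathrm{Im}(P^{w_0(J)})\subseteq W^{\{s_1\}}$. The trick is to exploit reduced-word independence of the operator $P^{w_0(J)}$ — which is precisely what Lemma \ref{lemma st} guarantees — together with the symmetry of the descent set of the longest element. Since $W_J$ is finite, $w_0(J)$ exists and satisfies $D_L(w_0(J))=J$, so for every $s\in J$ there is a reduced expression of $w_0(J)$ beginning with $s$. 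Applying the image bound to each such word gives $\mathrm{Im}(P^{w_0(J)})\subseteq W^{\{s\}}$ for all $s\in J$, hence $\mathrm{Im}(P^{w_0(J)})\subseteq\bigcap_{s\in J}W^{\{s\}}=W^J$, as required. Combining the two identities gives $P^{w_0(J)}=P^J$. As a sanity check, Proposition \ref{lemmaPu} already forces the two maps to agree on $W_J$, since $P^{w_0(J)}u=e\iff u\leqslant w_0(J)\iff u\in W_J$, matching $u^J=e$; but it is the image computation that upgrades this to equality on all of $W$.

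For the final assertion, if $|W|<\infty$ then $W_J$ is finite for every $J\subseteq S$, so each generator $P^J$ of $M(W,S)$ equals $P^{w_0(J)}$, which is a product of the idempotents $P^{\{s\}}$ and therefore lies in $\tilde{M}(W,S)$. This gives $M(W,S)\subseteq\tilde{M}(W,S)$, and the reverse inclusion holds by definition, so $M(W,S)=\tilde{M}(W,S)$. The single step I expect to require the most care is the image containment $\mathrm{Im}(P^{w_0(J)})\subseteq W^J$: everything else is a direct application of Lemma \ref{proiezioniprefascio} and the definitions, whereas this step only becomes forced once the well-definedness of $P^{w_0(J)}$ is combined with the fact that $w_0(J)$ admits reduced words starting with every element of $J$.
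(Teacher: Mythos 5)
Your proof is correct, but it follows a genuinely different route from the paper's. The paper deduces the identity from Proposition \ref{lemmaPu}, which is proved immediately beforehand: since $P^{w_0(J)}u=e$ if and only if $u\leqslant w_0(J)$, i.e. if and only if $u\in W_J$, the operator $P^{w_0(J)}$ is declared to act as $P^J$ on the basis of $V_W$ (this last step is compressed in the paper; it tacitly uses that the factors $P^{\{s\}}$ with $s\in J$ only alter the $W_J$-part of the factorization $w=w^Jw_J$). You instead avoid Proposition \ref{lemmaPu} altogether and sandwich the two operators: $P^J\circ P^{w_0(J)}=P^J$ by iterating Lemma \ref{proiezioniprefascio}, while $P^J\circ P^{w_0(J)}=P^{w_0(J)}$ because the image of $P^{w_0(J)}$ lies in $\bigcap_{s\in J}W^{\{s\}}=W^J$, which you extract from the reduced-word independence of $P^{w_0(J)}$ (Lemma \ref{lemma st} plus Matsumoto's theorem, as invoked in the paper) combined with the standard fact that $D_L(w_0(J))=J$, so that $w_0(J)$ has a reduced expression beginning with each $s\in J$. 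Your argument has the merit of making fully explicit the containment $\mathrm{Im}(P^{w_0(J)})\subseteq W^J$ that the paper leaves implicit, at the cost of importing two extra ingredients (well-definedness of $P^{w_0(J)}$ across reduced words and the characterization of left descents via initial letters); the paper's argument is shorter given that Proposition \ref{lemmaPu} is already in hand, and that proposition is needed elsewhere anyway. Both proofs of the final assertion $M(W,S)=\tilde{M}(W,S)$ for $|W|<\infty$ coincide: every generator $P^J$ of $M(W,S)$ equals $P^{w_0(J)}$ and hence lies in $\tilde{M}(W,S)$.
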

\begin{proof}
  Let $u=u^Ju_J\in W$ and $w_0(J)=\tilde{u}u_J$, for some $\tilde{u}\in W_J$. Then $P^{w_0(J)}u=P^{\tilde{u}}P^{u_J}u^Ju_J=P^{\tilde{u}}u^J=u^J=P^Ju$.
\end{proof}

Now we are ready to prove that the algebra $M_0(W,S)$ is isomorphic to the monoid algebra over $\mathbb{Z}$ of $W^M$.

\begin{thm} \label{isomorfismi di algebre}
  Let $(W,S)$ be a Coxeter system. Then the function $u\mapsto P^u$ defines an isomorphism of $\mathbb{Z}$-algebras
  $$ \mathbb{Z}[W^M] \simeq M_0(W,S).$$
\end{thm}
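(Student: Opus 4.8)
The plan is to exhibit the assignment $u \mapsto P^u$ as a surjective algebra homomorphism and then establish injectivity by a linear-independence argument. First I would set up the homomorphism abstractly rather than checking multiplicativity by hand. The monoid $W^M$ is presented by the generators $S$ subject to the idempotency relations $s^2=s$ and the braid relations. Since each $P^{\{s\}}$ is idempotent, and since Lemma \ref{lemma st} (applied with $k=m(s,t)$) shows that the endomorphisms $P^{\{s\}}$ satisfy exactly the braid relation encoded by $\{\{s\},\{t\}\}\notin E_{m(s,t)}$, the assignment $s\mapsto P^{\{s\}}$ respects all defining relations of $W^M$. By the universal property of a monoid presentation it therefore extends to a monoid homomorphism $\phi: W^M \to (\End(V_W),\circ)$. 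For $u\in W$ with reduced word $s_1\cdots s_k$, the element $u$ of $W^M$ is the product $s_1\cdots s_k$, so $\phi(u)=P^{\{s_1\}}\cdots P^{\{s_k\}}=P^u$; in particular the well-definedness of $P^u$ is recovered. Extending $\phi$ by $\mathbb{Z}$-linearity yields an algebra homomorphism $\Phi:\mathbb{Z}[W^M]\to \End(V_W)$.

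Surjectivity onto $\tilde{M}(W,S)$ is then immediate: by definition $\tilde{M}(W,S)$ is the subalgebra generated by $\{P^{\{s\}}:s\in S\}=\{\phi(s):s\in S\}$, hence it is spanned by the products $P^{\{s_1\}}\cdots P^{\{s_m\}}=\phi(s_1\cdots s_m)$, each of which is one of the elements $P^u$. Thus the image of $\Phi$ is exactly $\tilde{M}(W,S)$ and the family $\{P^u:u\in W\}$ spans it.

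The heart of the argument, and the step I expect to be the main obstacle, is injectivity, which amounts to showing that $\{P^u:u\in W\}$ is $\mathbb{Z}$-linearly independent in $\End(V_W)$. Here I would exploit the fact that each $P^v$ is an honest function $W\to W$, so applying it to a basis element $w$ returns a single basis element; the relevant bookkeeping is then the coefficient of the basis vector $e$. Suppose $\sum_v c_v P^v = 0$. Applying this operator to a basis element $w$ and reading off the coefficient of $e$ gives, by Proposition \ref{lemmaPu} (which says $P^v w=e$ if and only if $w\leqslant v$), the identity $\sum_{v\geqslant w} c_v = 0$, valid for every $w\in W$.

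Finally this triangular system forces all coefficients to vanish by a downward induction on the Bruhat order. If some $c_v\neq 0$, choose $w$ maximal in the finite support $\{v:c_v\neq 0\}$; then in $\sum_{v\geqslant w} c_v = 0$ every term with $v>w$ vanishes by maximality, leaving $c_w=0$, a contradiction. Hence $\Phi$ is injective, and together with surjectivity this gives the claimed isomorphism $\mathbb{Z}[W^M]\simeq \tilde{M}(W,S)$. The only delicate points to verify carefully are that the presentation relations of $W^M$ are matched precisely by Lemma \ref{lemma st} and that the coefficient-of-$e$ computation invokes the correct direction of Proposition \ref{lemmaPu}; the remainder is formal.
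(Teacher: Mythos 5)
Your proposal is correct and takes essentially the same approach as the paper: the paper likewise relies on Lemma \ref{lemma st} for the relations and proves injectivity by evaluating a purported dependence $\sum_{w\in B} a_wP^w=0$ at a maximal element $v$ of its support and reading off the coefficient of $e$ via Proposition \ref{lemmaPu}. Your universal-property framing of the homomorphism and the triangular system $\sum_{v\geqslant w}c_v=0$ are just slightly more explicit renderings of the same steps.
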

\begin{proof}
 Let $a=\sum \limits_{w\in B} a_wP^w \in M_0(W,S)$ with $a_w\in \mathbb{Z}\setminus \{0\}$ for all $w\in B$, where $B$ is any finite subset of $W$. Then there exists a set
 $M(B):=\{v_1,...,v_k\}$ of
maximal elements in $B$. Therefore, if $a=0$, we have that for every $v\in M(B)$,
$av=a_ve+\sum \limits_{w\in B\setminus\{v\}} a_wP^wv =0$, where $P^wv\neq e$ for all $w\in B\setminus\{v\}$ (by Proposition \ref{lemmaPu}); this implies $a_v=0$, for all
$v\in M(B)$. Hence $\{P^w:w\in W\}$ is a $\mathbb{Z}$-basis for $M_0(W,S)$ and then $V_W \simeq M_0(W,S)$ as $\mathbb{Z}$-modules.
The result follows since $W=W^M$, $\{P^s:s\in S\}$ are idempotents which generate $M_0(W,S)$ and, by Lemma \ref{lemma st}, they satisfy the same relations as the generators of the algebra  $\mathbb{Z}[W^M]$.
\end{proof}

By Theorem \ref{isomorfismi di algebre}, for any Coxeter system $(W,S)$ we have obtained a faithful representation of the monoid algebra $\mathbb{Z}[W^M]$, of dimension
$|W|$.

Since  $P^Jw \leqslant w$ for all $w\in W$, we can define a function $\mathcal{P}^J: \mathrm{Int}(W)\rightarrow \mathbb{Z}$ by $$\mathcal{P}^J(u,v):=\left\{
                                           \begin{array}{ll}
                                             1, & \hbox{if $u=P^Jv$;} \\
                                             0, & \hbox{otherwise,}
                                           \end{array}
                                         \right.$$ for all $[u,v]\in \mathrm{Int}(W)$. This implies the following corollary.

\begin{cor} \label{cor inject 1}
  Let $(W,S)$ be a Coxeter system. Then the assignment $P^J \mapsto \mathcal{P}^J$ gives an injective algebra morphism from $\mathbb{Z}[W^M]$ to the incidence algebra
  $I(W;\mathbb{Z})$.
\end{cor}

\begin{ex}
  Let $S_3$ be the symmetric group of order $6$ with generators $\{s,t\}$.
Then $M_0(S_3,\{s,t\})$ is the $\mathbb{Z}$-algebra generated by the identity
and the matrices $$P^{\{s\}}=\left(
                           \begin{array}{cccccc}
                             1 & 1 & 0 & 0 & 0 & 0 \\
                             0 & 0 & 0 & 0 & 0 & 0 \\
                             0 & 0 & 1 & 0 & 1 & 0 \\
                             0 & 0 & 0 & 1 & 0 & 1 \\
                             0 & 0 & 0 & 0 & 0 & 0 \\
                             0 & 0 & 0 & 0 & 0 & 0 \\
                           \end{array}
                         \right),~ P^{\{t\}}=\left(
                           \begin{array}{cccccc}
                             1 & 0 & 1 & 0 & 0 & 0 \\
                             0 & 1 & 0 & 1 & 0 & 0 \\
                             0 & 0 & 0 & 0 & 0 & 0 \\
                             0 & 0 & 0 & 0 & 0 & 0 \\
                             0 & 0 & 0 & 0 & 1 & 1 \\
                             0 & 0 & 0 & 0 & 0 & 0 \\
                           \end{array}
                         \right),
 $$ having chosen the following linear extension of the Bruhat order of $(S_3,\{s,t\})$:
$e<s<t<st<ts<sts$.
\end{ex}

\begin{oss}
 By the factorizations $w=w^Jw_J$ and its left version, one can see  that $P^Jw\leqslant_R w$ and $Q^Jw\leqslant_L w$, where $\leqslant_R$ and $\leqslant_L$ are the right
 weak order and the left weak order respectively (see \cite[Chapter~3]{BB}). Therefore the representation of the Coxeter monoid algebra $\mathbb{Z}[W^M]$ given by the
 endomorphisms $P^u$ realizes an injection into the incidence algebra $I(W,\leqslant_R;\mathbb{Z})$ and the one given by the endomorphisms $Q^u$ realizes an injection into
 the incidence algebra $I(W,\leqslant_L;\mathbb{Z})$. These algebras are isomorphic and they are subalgebras of $I(W;\mathbb{Z})$, since $(W,\leqslant_L) \simeq
 (W,\leqslant_R)$ and both are subposets of $(W,\leqslant)$.
\end{oss}

In the next lemma we prove that the idempotents in the Coxeter monoid $W^M$ are the maxima of the finite parabolic subgroups (see also \cite[Example~3.9]{denton1} and
\cite[Theorem~9]{KenneyCoxetermonoid}).
\begin{lem}
 The endomorphism $P^u$ is idempotent if and only if $u=w_0(J)$, for some $J\subseteq S$ such that $|W_J|<\infty$.
\end{lem}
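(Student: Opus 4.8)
The plan is to reduce the statement to a question about idempotents in the Coxeter monoid $W^M$ and then identify those idempotents explicitly.

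For the direction $(\Leftarrow)$, suppose $u=w_0(J)$ for some $J\subseteq S$; then necessarily $|W_J|<\infty$, and by the corollary stating that $P^{w_0(J)}=P^J$ whenever $|W_J|<\infty$ we have $P^u=P^{w_0(J)}=P^J$. Since $P^J$ is an idempotent function, being the projection onto $W^J$ (so $P^J\circ P^J=P^J$, e.g.\ by Lemma \ref{proiezioniprefascio}), the endomorphism $P^u$ is idempotent. For the converse I would first invoke Theorem \ref{isomorfismi di algebre}: the map $u\mapsto P^u$ is an algebra isomorphism $\mathbb{Z}[W^M]\simeq\tilde{M}(W,S)$, and in particular it is injective on the basis $\{P^w:w\in W\}$. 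Hence $P^uP^u=P^u$ is equivalent to $u\star u=u$, where $\star$ denotes the product of the Coxeter monoid $W^M$ (the Demazure product, characterized on generators by $w\star s=ws$ if $\ell(ws)>\ell(w)$ and $w\star s=w$ otherwise). Thus $P^u$ is idempotent if and only if $u$ is an idempotent of $W^M$, and it remains to show that these idempotents are exactly the elements $w_0(J)$.

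The key step is to analyse $u\star u$ through a reduced word $u=s_1s_2\cdots s_k$, using that this word is also an expression for $u$ in $W^M$, so that $u\star u=u\star s_1\star s_2\star\cdots\star s_k$ by associativity. Each factor $\star s$ either fixes the current element or increases its length by one, so $\ell(u\star s_1\star\cdots\star s_i)$ is non-decreasing in $i$ and starts at $\ell(u)$; therefore $u\star u=u$ forces every intermediate product to remain equal to $u$, which happens precisely when each $s_i$ is a right descent of the current element, i.e.\ of $u$ itself. In other words, $u\star u=u$ iff every generator occurring in a reduced word for $u$ lies in $D_R(u)$, that is, $\mathrm{supp}(u)\subseteq D_R(u)$; since always $D_R(u)\subseteq\mathrm{supp}(u)$, this says $D_R(u)=\mathrm{supp}(u)=:J$. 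Then $u\in W_J$ and every simple generator of the parabolic system $(W_J,J)$ is a right descent of $u$, so by the standard characterization of the longest element of a Coxeter system (see \cite{BB}) the group $W_J$ is finite and $u=w_0(J)$, as required.

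The main obstacle is this middle step: controlling $u\star u$ finely enough to see that idempotency of $u$ in $W^M$ is equivalent to the inclusion $\mathrm{supp}(u)\subseteq D_R(u)$. Once this combinatorial criterion is established, both the finiteness of $W_J$ and the identity $u=w_0(J)$ follow from the classical fact that an element all of whose supporting simple reflections are right descents must be the (necessarily existing) longest element of the corresponding parabolic subgroup. I expect no difficulty in the $(\Leftarrow)$ direction, which is immediate from the already proved corollary and the idempotency of $P^J$.
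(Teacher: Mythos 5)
Your proof is correct, and the $(\Leftarrow)$ direction coincides with the paper's. For the $(\Rightarrow)$ direction, however, you take a genuinely different route. The paper argues by contraposition and produces an explicit witness: if $u\neq w_0(J)$ for every $J$, it picks $s\leqslant u$ with $su>u$ (the same classical fact about longest parabolic elements that you invoke, in left-handed form) and checks directly that $P^uP^u(su)=P^u s=e$ while $P^u(su)=s$, using only Proposition \ref{lemmaPu} plus a one-line computation with the reduced word $ss_1\cdots s_k$; it never needs the multiplicativity or injectivity of $u\mapsto P^u$. You instead push the whole question into the Coxeter monoid via Theorem \ref{isomorfismi di algebre} ($P^uP^v=P^{u\star v}$ and injectivity on the basis), and then classify the idempotents of $W^M$ by the monotonicity of length along $u\star s_1\star\cdots\star s_k$, arriving at the criterion $D_R(u)=\mathrm{supp}(u)$. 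Your length argument is sound (each factor $\star s_i$ either fixes the element or raises length, so $u\star u=u$ forces every $s_i\in D_R(u)$), and the conclusion $u=w_0(J)$ with $J=\mathrm{supp}(u)$ then follows from the standard characterization of longest elements. What your approach buys is a conceptually cleaner statement --- you actually prove that the idempotents of $W^M$ are exactly the $w_0(J)$, which is the result the paper attributes to \cite{denton1} and \cite{KenneyCoxetermonoid} in the sentence preceding the lemma --- at the cost of leaning on the full strength of Theorem \ref{isomorfismi di algebre}; the paper's witness computation is more elementary and self-contained.
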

\begin{proof}
  We have already proved that $P^{w_0(J)}=P^J$.
  So let $P^uP^u=P^u$. If $u\neq w_0(J)$ for all $J\subseteq S$ such that $|W_J|<\infty$, then
  there exists $s\in S$ such that $su>u$ and $s<u$.
  Therefore by Proposition \ref{lemmaPu} $P^uP^u(su)=P^us=e\neq s= P^u(su)$.
\end{proof}


We can define a class of $\mathbb{Z}[W^M]$-submodules of $V_W$ in the following manner. For any $v\in W$, by the regressivity of $P^J$, the
$\mathbb{Z}$-submodule of $V_W$
\begin{equation}\label{Vv}
  V_v:=\spn_{\mathbb{Z}}[e,v]
\end{equation} is a $\mathbb{Z}[W^M]$-submodule of $V_W$; moreover, by Lemma \ref{proiezionicommutano} and the left version of Proposition \ref{ordinepreservato}, the
$\mathbb{Z}$-module endomorphisms $Q^J \in \End(V_W)$ are $\mathbb{Z}[W^M]$-module endomorphisms of $V_v$, i.e. $Q^J \in \End_{\mathbb{Z}[W^M]}(V_v)$.
So the image of $Q^J$ is a $\mathbb{Z}[W^M]$-submodule of $V_W$; we define this image by
$$V^{J,v}:=\spn_{\mathbb{Z}} \{z \in {^J}W: z\leqslant Q^Jv\}.$$
Thus, for any $J\subseteq S$, the $\mathbb{Z}[W^M]$-modules $V_v$ decompose as $$V_v = V^{J,v} \oplus \spn_{\mathbb{Z}}\{u-Q^Ju:u\in {^{\setminus J}}[e,v]\},$$ where we have
defined ${^{\setminus J}}[u,v]:=\{z \in W\setminus {^J}W:u\leqslant z\leqslant v\}$, for all $u,v\in W\setminus {^J}W$.
By Lemma \ref{proiezionicommutano} we can also define, for any $J \subseteq S$, subalgebras of $\End(V_W)$ by
\begin{eqnarray*}
  &&M^J(W,S) := \{Q^Ja:a\in M(W,S)\}, \\
  &&M^{\setminus J}(W,S) := \{a-Q^Ja:a\in M(W,S)\}.
\end{eqnarray*} Then we have the following isomorphism of algebras: $$M(W,S) \simeq M^J(W,S) \oplus M^{\setminus J}(W,S).$$

Given an idempotent $P\neq \I$, let us define the idempotent $\overline{P}:=\I-P$ and let $\overline{\I}=\I$. For any $v\in W$ with reduced expression $s_1s_2 \cdots s_k$,
the endomorphism $(\I-P^{s_1})$ $(\I-P^{s_2})$ $\cdots (\I-P^{s_k})$ will be denoted by $\overline{P}^v$. The following proposition shows that $\overline{P}^v$ is well
defined, since it is independent from the choice of the reduced expression of $v$.
\begin{prop} \label{Pmoebius} Let $(W,S)$ be a Coxeter system. Then
  $$\overline{P}^v=\sum \limits_{u\leqslant v}(-1)^{\ell(u)}P^u$$ and
  $$P^v=\sum \limits_{u\leqslant v}(-1)^{\ell(u)}\overline{P}^u,$$ for all $v\in W$.
\end{prop}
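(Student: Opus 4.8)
We must establish the two mutually inverse expansions
$$\bar{P}^v=\sum_{u\leqslant v}(-1)^{\ell(u)}P^u, \qquad P^v=\sum_{u\leqslant v}(-1)^{\ell(u)}\bar{P}^u,$$
and in particular that $\bar{P}^v$ is independent of the chosen reduced expression of $v$. Let me think about how the pieces fit.

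We have $\bar{P}^v = (\mathrm{Id} - P^{s_1})(\mathrm{Id}-P^{s_2})\cdots(\mathrm{Id}-P^{s_k})$ for a reduced word $s_1\cdots s_k$. The single-generator projections $P^{\{s\}}$ satisfy $P^{\{s\}}P^{\{s\}} = P^{\{s\}}$ (idempotent), and products of them give the well-defined $P^u$ by Lemma~\ref{lemma st}. The claim that $\bar{P}^v = \sum_{u \le v}(-1)^{\ell(u)}P^u$ looks like an inclusion–exclusion / Möbius-function identity, and the second formula is its inverse, which strongly suggests the two maps $v \mapsto P^v$ and $v \mapsto \bar{P}^v$ are related by Möbius inversion over the Bruhat order. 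Let me plan accordingly.

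The plan is to prove both identities by induction on $\ell(v)$, proving simultaneously that $\bar{P}^v$ does not depend on the chosen reduced expression; this well-definedness is immediate once the first identity is established, since its right-hand side visibly depends only on $v$. The engine of both inductions is the Coxeter-monoid relation satisfied by the generators: writing $u\cdot s$ for the monoid product (so $u\cdot s=us$ if $s\notin D_R(u)$ and $u\cdot s=u$ if $s\in D_R(u)$), one has $P^uP^{\{s\}}=P^{u\cdot s}$, which follows at once from the definition of $P^u$, the idempotence of $P^{\{s\}}$, and Lemma \ref{lemma st}. I would record alongside it the dual relation $\bar{P}^u\bar{P}^{\{s\}}=\bar{P}^{u\cdot s}$; its only nontrivial instance is $s\in D_R(u)$, where, writing $u=u's$ reduced, one computes $\bar{P}^uP^{\{s\}}=\bar{P}^{u'}(\I-P^{\{s\}})P^{\{s\}}=0$ by idempotence, so that $\bar{P}^u\bar{P}^{\{s\}}=\bar{P}^u=\bar{P}^{u\cdot s}$.

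For the first identity I take $s=s_k\in D_R(v)$ and set $v'=s_1\cdots s_{k-1}$, so that $\bar{P}^v=\bar{P}^{v'}(\I-P^{\{s\}})=\bar{P}^{v'}-\bar{P}^{v'}P^{\{s\}}$. Substituting the inductive formula for $\bar{P}^{v'}$ and applying $P^uP^{\{s\}}=P^{u\cdot s}$ termwise, the terms with $s\in D_R(u)$ cancel and I am left with
\[\bar{P}^v=\sum_{\substack{u\leqslant v'\\ s\notin D_R(u)}}(-1)^{\ell(u)}P^u-\sum_{\substack{u\leqslant v'\\ s\notin D_R(u)}}(-1)^{\ell(u)}P^{us}.\]
The hard part is matching this against $\sum_{u\leqslant v}(-1)^{\ell(u)}P^u$, and this is exactly where the Bruhat combinatorics enters. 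Using the lifting property (Proposition \ref{sollevamento}) I would establish, for $s\in D_R(v)$, the disjoint decomposition of the interval $[e,v]$: the elements $u\leqslant v$ with $s\notin D_R(u)$ are precisely those with $u\leqslant v'$, while the elements $u\leqslant v$ with $s\in D_R(u)$ are precisely those of the form $u=ws$ with $w\leqslant v'$ and $s\notin D_R(w)$ (equivalently $us\leqslant v'$). Reindexing the second sum above by $w=us$ (so $\ell(w)=\ell(u)+1$ and $s\in D_R(w)$) turns it into the contribution of exactly this second block with the correct sign, and the two blocks reassemble into $\sum_{u\leqslant v}(-1)^{\ell(u)}P^u$.

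For the second identity I would run the identical induction with the roles of the generators reversed, using $P^{\{s\}}=\I-\bar{P}^{\{s\}}$ in place of $\bar{P}^{\{s\}}=\I-P^{\{s\}}$ together with the dual relation $\bar{P}^u\bar{P}^{\{s\}}=\bar{P}^{u\cdot s}$ recorded above; the same interval decomposition then yields $P^v=\sum_{u\leqslant v}(-1)^{\ell(u)}\bar{P}^u$. Alternatively, once the first identity is in hand the second is pure linear algebra: the two families are related by the Bruhat-triangular matrix with entries $(-1)^{\ell(u)}$ on $\{u\leqslant v\}$, and this matrix is an involution precisely because $\sum_{u\in[w,v]}(-1)^{\ell(u)}=(-1)^{\ell(w)}\delta_{w,v}$, which is the statement that the M\"obius function of the Bruhat order is $\mu(w,v)=(-1)^{\ell(v)-\ell(w)}$. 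I expect the self-contained dual induction to be the cleaner route, the only genuine obstacle in either approach being the careful descent-based bookkeeping of the interval $[e,v]$.
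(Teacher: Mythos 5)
Your argument for the first identity is essentially the paper's proof: the same induction on $\ell(v)$, peeling off a right descent $s$ via $\bar{P}^v=\bar{P}^{vs}(\I-P^{\{s\}})$, splitting the sum according to whether $s\in D_R(u)$, and reassembling the interval $[e,v]$ with the lifting property; for the second identity the paper simply invokes M\"obius inversion with $\mu(u,v)=(-1)^{\ell(u,v)}$, which is the alternative route you yourself record, so the dual induction you prefer is only a cosmetic variation. One phrasing slip worth fixing: ``the elements $u\leqslant v$ with $s\notin D_R(u)$ are precisely those with $u\leqslant v'$'' should read ``precisely those $u\leqslant v'$ with $s\notin D_R(u)$'', since $[e,v']$ can contain elements having $s$ as a right descent (e.g.\ $u=s\leqslant st$ for $v=sts$); your displayed sums already carry the restriction $s\notin D_R(u)$, so the computation itself is unaffected.
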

\begin{proof}
  We proceed by induction on $\ell(v)$. If $v=e$ the result is obvious.
  Let $\ell(v)>1$ and $s_1s_2 \cdots s$ be a reduced expression for $v$. Then, by the inductive hypothesis,
  \begin{eqnarray*}
    \overline{P}^{s_1}\overline{P}^{s_2}\cdots \overline{P}^s &=& \overline{P}^{vs}(\I-P^s) \\
    &=& \sum \limits_{u\leqslant vs}(-1)^{\ell(u)}P^u-\left(\sum \limits_{u\leqslant vs}(-1)^{\ell(u)}P^u\right)P^s \\
    &=& \sum \limits_{u\leqslant vs}(-1)^{\ell(u)}P^u-\sum \limits_{\substack{u\leqslant vs\\ us<u}}(-1)^{\ell(u)}P^u-\sum \limits_{\substack{u\leqslant vs\\
    u<us}}(-1)^{\ell(u)}P^{us} \\
    &=& \sum \limits_{\substack{u\leqslant vs\\u<us}}(-1)^{\ell(u)}P^u+\sum \limits_{\substack{u\leqslant v\\ us<u}}(-1)^{\ell(u)}P^u \\ &=&
     \sum \limits_{\substack{u\leqslant v\\u<us}}(-1)^{\ell(u)}P^u+\sum \limits_{\substack{u\leqslant v\\ us<u}}(-1)^{\ell(u)}P^u \\&=&
    \sum \limits_{u\leqslant v}(-1)^{\ell(u)}P^u,
  \end{eqnarray*} since, by Proposition \ref{sollevamento}, $\{u\in [e,vs]:u<us\}=\{u\in [e,v]:u<us\}$. The second assertion can be proved by the same argument.
\end{proof}

\begin{oss}
  The involution $P^s\mapsto \overline{P}^s$ defines an involution on the $0$-Hecke algebra analogous to the involution $\iota$ on $\mathcal{H}(W,S)$, which is not defined
  for $q=0$. Compare the expression of $\iota$ in the standard basis of the Hecke algebra and the $R$-polynomials at $q=0$ (see, e.g., \cite[Sections~5.1 and~6.1]{BB}) with
  the result of Proposition \ref{Pmoebius}.
\end{oss}

\section{Representations of a right-angled Artin group in an incidence algebra} \label{4}

In this section we prove that the Hecke algebra of a right-angled Coxeter group injects in the $\mathbb{Z}[q,q^{-1}]$-algebra of the corresponding Coxeter monoid and that the
function $s \mapsto T_s$ provides an embedding of a right-angled Artin group $R^A$
into the Hecke algebra of the Coxeter system $(R,S)$ and then, by Corollary \ref{cor inject 1},
in the incidence $\mathbb{Z}[q,q^{-1}]$-algebra of the Bruhat poset of $R$. We recall that we have defined $A:=\mathbb{Z}[q,q^{-1}]$.

Given a right-angled Coxeter system $(R,S)$ the functions $f^q: S \rightarrow \End(A\otimes_{\mathbb{Z}} V_R)$
and $f^{-1}:S \rightarrow \End(A\otimes_{\mathbb{Z}} V_R)$ defined by
$$f^q(s)=q\I-(q+1)P^s$$ and
$$f^{-1}(s)=-\I+(q+1)P^s$$ for all $s\in S$
give two representations $\sigma^{q,R} : \mathcal{H}(R)\rightarrow \End(A\otimes_{\mathbb{Z}} V_R)$ and
 $\sigma^{-1,R} : \mathcal{H}(R)\rightarrow \End(A\otimes_{\mathbb{Z}} V_R)$ respectively.
Note that $f^{-1}(s)=q\I-(q+1)\overline{P}^s$ and $f^q(s)=-\I+(q+1)\overline{P}^s$, so that $\overline{f^{-1}(s)}=f^q(s)$, for all $s\in S$.
These functions, when defined on the set of generators of Coxeter systems of other types, do not provide
representations of their Hecke algebras. In fact we have the following results.
\begin{prop} \label{prop hecke}
  Let $(W,S)$ be a Coxeter system. Then
  $$[f^q(s)]^2 = q\I+(q-1)f^q(s),$$
  $$[f^{-1}(s)]^2 = q\I+(q-1)f^{-1}(s)$$ and
  $$ [f^q(s)f^q(t)]^nv=(-q)^n[v-n(q^{-1}+1)vs]+ke,$$
  $$ [f^{-1}(s)f^{-1}(t)]^nv=(-q)^n[v-n(q+1)vs]+k'e,$$ for all $n>0$, $s,t \in S$ such that $m(s,t)>2$,
  where $v:=ts$ and $k,k'\in  A$.
\end{prop}
\begin{proof}
  The first two equalities follow by a direct computation.
  We prove the second ones by induction on $n$. Let us prove the first equality.
  If $n=1$ then $(q\I-(q+1)P^s)(q\I-(q+1)P^t)v=-qv+(q+1)t$ and the result is true. Let $n>1$. Therefore
  \begin{eqnarray*}
   && ((q\I-(q+1)P^s)(q\I-(q+1)P^t))^nv  \\ &=& (q\I-(q+1)P^s)(q\I-(q+1)P^t)((-q)^{n-2}(-qv+(n-1)(q+1)t)+ke) \\
    &=& (-q)^{n-2}(q\I-(q+1)P^s)(qv+(n-1)q(q+1)t))+k'e \\
    &=& (-q)^{n-2}(q^2v-nq(q+1)t)+k''e  \\ &=&(-q)^{n-1}(-qv+n(q+1)t)+k''e,
  \end{eqnarray*} for some $k,k',k'' \in A$.

We prove now the second equality. If $n=1$ then $(-\I+(q+1)P^s)(-\I+(q+1)P^t)v=-q(v-(q+1)t)$ and the result is true. Let $n>1$. Therefore, by the inductive hypothesis,
  \begin{eqnarray*}
   && ((-\I+(q+1)P^s)(-\I+(q+1)P^t))^nv  \\ &=& (-\I+(q+1)P^s)(-\I+(q+1)P^t)((-q)^{n-1}(v-(n-1)(q+1)t)+ke) \\
    &=& (-q)^{n-1}(-\I+(q+1)P^s)(qv+(n-1)(q+1)t))+k'e \\
    &=& (-q)^{n-1}(-qv+nq(q+1)t)+k''e  \\ &=&(-q)^n(v-n(q+1)t)+k''e,
  \end{eqnarray*} for some $k,k',k'' \in A$.
\end{proof} When $m(s,t)>2$, by specialization at $q=1$, the results of Proposition \ref{prop hecke}  implies that $[f^x(s)f^x(t)]^n|_{q=1} \neq \I$, for all $x\in
\{-1,q\}$, $n>0$.
Therefore, in the Hecke algebra, $$\underbrace{f^x(s)f^x(t) \cdots f^x(s)}\limits_{\mbox{n times}} \neq \underbrace{f^x(t)f^x(s) \cdots f^x(t)} \limits_{\mbox{n times}},$$
for all $x\in \{-1,q\}$, $n >0$. This means that when $(W,S)$ is not right-angled then $f^q$ and $f^{-1}$ do not provide
representations of its Hecke algebras (not even of its group algebra, when $q=1$). Note that for $q=0$ they realize the representation of the Coxeter monoid algebra studied
in the previous section, in all types.

The representations of the Hecke algebra of a right-angled Coxeter group defined above are faithful;
this is proved in the next theorem.

\begin{thm} \label{rep faith}
  The representation $\sigma^{x,R}$ is faithful, for all $x\in \{-1,q\}$.
\end{thm}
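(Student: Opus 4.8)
The plan is to reduce faithfulness of $\sigma^{x,R}$ to the $A$-linear independence of the family $\{\sigma^{x,R}(T_w):w\in R\}$ in $\End(A\otimes_{\mathbb{Z}}V_R)$. Since $\mathcal{H}(R)$ is a free $A$-module with basis $\{T_w:w\in R\}$, an algebra homomorphism out of $\mathcal{H}(R)$ is injective precisely when the images of this basis are $A$-linearly independent. I would establish this by expanding each $\sigma^{x,R}(T_w)$ in the family $\{P^u:u\in R\}$, which by Theorem~\ref{isomorfismi di algebre} is a $\mathbb{Z}$-basis of $\tilde{M}(R,S_R)$ and therefore, after extension of scalars to the integral domain $A$, stays $A$-linearly independent.

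For the expansion, write $f^x(s)=a_x\I+b_x P^{s}$, where $(a_q,b_q)=(q,-(q+1))$ and $(a_{-1},b_{-1})=(-1,q+1)$; the decisive feature is that in both cases $b_x=\pm(q+1)$ is a nonzero element of $A$. Fix a reduced expression $s_1s_2\cdots s_k$ for $w$; since $\sigma^{x,R}$ is a homomorphism and $T_w=T_{s_1}\cdots T_{s_k}$, one has
\[
\sigma^{x,R}(T_w)=\prod_{i=1}^{k}\bigl(a_x\I+b_x P^{s_i}\bigr)=\sum_{B\subseteq[k]}a_x^{\,k-|B|}\,b_x^{\,|B|}\,\Bigl(\prod_{i\in B}P^{s_i}\Bigr).
\]
Using the isomorphism of Theorem~\ref{isomorfismi di algebre} I would identify each ordered product $\prod_{i\in B}P^{s_i}$ with a single projection $P^{u_B}$, where $u_B\in R$ is the image in the Coxeter monoid $R^M$ of the subword $(s_i)_{i\in B}$; in particular $\ell(u_B)\leqslant|B|$, with $u_{[k]}=w$ and $\ell(u_{[k]})=k$ because the chosen word is reduced.

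Writing $\sigma^{x,R}(T_w)=\sum_{u\in R}c_{w,u}\,P^u$ with $c_{w,u}\in A$, the subset $B=[k]$ contributes $b_x^{\,k}P^w$, while every proper $B\subsetneq[k]$ contributes some $P^{u_B}$ with $\ell(u_B)\leqslant|B|<k$; hence $c_{w,u}=0$ unless $\ell(u)\leqslant\ell(w)$, and among $u$ with $\ell(u)=\ell(w)$ only $u=w$ occurs, with $c_{w,w}=b_x^{\,\ell(w)}\neq0$. Refining the Bruhat order to a total order compatible with $\ell$, the matrix $(c_{w,u})$ is thus lower triangular over $A$ with nonzero diagonal, so from a vanishing finite combination $\sum_w\lambda_w\,\sigma^{x,R}(T_w)=0$ one picks $w^\ast$ of maximal length in its support and reads off the coefficient of $P^{w^\ast}$ to get $\lambda_{w^\ast}b_x^{\,\ell(w^\ast)}=0$, forcing $\lambda_{w^\ast}=0$ since $A$ is a domain. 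This gives the independence uniformly for $x\in\{-1,q\}$. The main obstacle is precisely the identification $\prod_{i\in B}P^{s_i}=P^{u_B}$ with the length bound $\ell(u_B)\leqslant|B|$: this is what collapses arbitrary products of the generating idempotents back into the linearly independent family $\{P^u\}$ and makes the leading coefficient $b_x^{\,\ell(w)}$ both isolated and nonzero; the remainder is the standard triangular-matrix bookkeeping.
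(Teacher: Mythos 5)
Your argument is correct and follows essentially the same route as the paper: expand each $\sigma^{x,R}(T_w)$ in the basis $\{P^u\}$ of $\tilde{M}(R,S_R)$, isolate the leading term $\pm(q+1)^{\ell(w)}P^w$, and conclude by the linear independence of the $P^u$ (Theorem \ref{isomorfismi di algebre}) together with flatness of $A$ over $\mathbb{Z}$. The only cosmetic difference is that you extract the leading term via the length filtration while the paper uses Bruhat-maximal elements of the support; both give the same triangularity.
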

\begin{proof}
  Let $a:=\sum \limits_{w\in B} a_wT_w \in \mathcal{H}(R)$, $B\subseteq R$, $|B|<\infty$ and $a_w\in A\setminus\{0\}$ for all $w\in B$.
  Let $M(B)$ be the set of maximal elements in $B$. Then $\sigma^{-1,R}(a)=\sum \limits_{v\in M(B)}(q+1)^{\ell(v)}a_vP^v+a'$
  for some $a' \in A\otimes_{\mathbb{Z}}M_0(R,S)$ which does not lie in the span of $M(B)$. By Theorem \ref{isomorfismi di algebre} and the flatness of $A$
  over $\mathbb{Z}$, we conclude that $\sigma^{-1,R}(a)=0$ implies $a_w=0$ for all $w\in B$. The case $x=q$ is analogous.
\end{proof} Theorem \ref{rep faith} gives an injection of the Hecke algebra of $R$ into the monoid algebra of $R^M$ over the ring $A$. Therefore, extending to the ring $A$
the result of Corollary \ref{cor inject 1}, we obtain the next corollary.
\begin{cor} \label{cor inject 2}  Let $R$ be a right-angled Coxeter group. Then we have the following injections of $A$-algebras:
  $$ \mathcal{H}(R) \hookrightarrow A[R^M] \hookrightarrow I(R;A).$$
\end{cor}

Now we consider a right-angled Artin group $R^A$. We want to define an infinite dimensional faithful representation
$\Sigma^{x,t}: R^A \rightarrow \Aut(\mathbb{Q}\otimes_{\mathbb{Z}}V_R)$, for any $t\in \mathbb{Q}\setminus \{-1,0,1\}$, $x\in \{-1,q\}$. We need the following
proposition, whose statement can be easily verified.
\begin{prop} \label{propPotenze}Let $V$ be a $A$-module and $P \in \End(V)$ an idempotent. Then
\begin{eqnarray*}
  (q\I-(q+1)P)^n &=& q^n\I-(q^n-(-1)^n)P, \\
  (-\I+(q+1)P)^n &=& (-1)^n\I+(q^n-(-1)^n)P,
\end{eqnarray*} for all $n\in \mathbb{Z}$.
\end{prop}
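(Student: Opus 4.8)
The plan is to diagonalize both operators by means of the complementary idempotent decomposition. Recall from the text that for an idempotent $P$ we set $\bar{P}=\I-P$; since $P^2=P$ one checks at once that $\bar{P}^2=\bar{P}$, $P\bar{P}=\bar{P}P=0$ and $P+\bar{P}=\I$, so that $P$ and $\bar{P}$ are orthogonal idempotents summing to the identity. Writing $q\I=q(P+\bar{P})$ and $-\I=-(P+\bar{P})$ and collecting terms, I would first record the two identities
$$q\I-(q+1)P=-P+q\bar{P},\qquad -\I+(q+1)P=qP-\bar{P},$$
which recast each operator in the form $\alpha P+\beta\bar{P}$ with scalar coefficients $\alpha,\beta\in\mathbb{Z}[q^{-1},q]$.

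Next I would exploit the orthogonality. For any scalars $\alpha,\beta,\gamma,\delta\in\mathbb{Z}[q^{-1},q]$ the product
$$(\alpha P+\beta\bar{P})(\gamma P+\delta\bar{P})=\alpha\gamma\,P+\beta\delta\,\bar{P}$$
is computed componentwise, since the cross terms involving $P\bar{P}$ and $\bar{P}P$ vanish while $P^2=P$ and $\bar{P}^2=\bar{P}$. An immediate induction then yields $(\alpha P+\beta\bar{P})^n=\alpha^n P+\beta^n\bar{P}$ for every $n\geqslant 0$, the case $n=0$ reproducing $\I=P+\bar{P}$. To reach negative exponents I would observe that whenever $\alpha$ and $\beta$ are units of $\mathbb{Z}[q^{-1},q]$ the operator $\alpha P+\beta\bar{P}$ is invertible, with inverse $\alpha^{-1}P+\beta^{-1}\bar{P}$ by the same componentwise product, so the formula $(\alpha P+\beta\bar{P})^n=\alpha^n P+\beta^n\bar{P}$ persists for all $n\in\mathbb{Z}$.

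Applying this to the two cases $(\alpha,\beta)=(-1,q)$ and $(\alpha,\beta)=(q,-1)$ — in both of which the coefficients are units — I would obtain $(-P+q\bar{P})^n=(-1)^nP+q^n\bar{P}$ and $(qP-\bar{P})^n=q^nP+(-1)^n\bar{P}$. Substituting $\bar{P}=\I-P$ back and simplifying, the first expression becomes $q^n\I-(q^n-(-1)^n)P$ and the second becomes $(-1)^n\I+(q^n-(-1)^n)P$, which are precisely the two claimed formulas. The calculation is otherwise routine; the only point that warrants a word of care is the passage to negative $n$, which is exactly where the invertibility of $q$ — that is, the fact that we work over the Laurent ring $\mathbb{Z}[q^{-1},q]$ rather than over $\mathbb{Z}[q]$ — is used.
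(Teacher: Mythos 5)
Your proof is correct and complete. The paper itself offers no argument for this proposition (it is introduced with the remark that its statement ``can be easily verified''), so there is nothing to compare against; your verification via the orthogonal idempotent decomposition $\I=P+\bar{P}$, giving $(\alpha P+\beta\bar{P})^n=\alpha^n P+\beta^n\bar{P}$ componentwise for all $n\in\mathbb{Z}$, is the natural way to supply the missing details, and you are right to flag that the extension to negative exponents is exactly where invertibility of the coefficients $-1$ and $q$ in $\mathbb{Z}[q^{-1},q]$ enters.
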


Let $\mathcal{H}^*(R)$ be the group of invertible elements of the Hecke algebra of $R$. The next theorem asserts
that the group morphism sending $s \in S$ to $T_s \in \mathcal{H}^*(R)$ for all $s\in S$ provides an injective group morphism from $R^A$ to $\mathcal{H}^*(R)$.

\begin{thm} \label{rappresentazioni Artin}
  The group morphism $\phi : R^A \rightarrow \mathcal{H}^*(R)$ defined on the generators by $\phi(s)=T_s$ for all $s\in S$, is injective.
Moreover, specializing at $q=t\in \mathbb{Q}\setminus \{-1,0,1\}$, it gives a faithful representation $\Sigma^{x,t}:R^A \rightarrow \Aut(\mathbb{Q} \otimes_{\mathbb{Z}}
V_R)$,
for all  $x\in \{-1,q\}$.
\end{thm}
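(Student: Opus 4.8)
The plan is to factor $\phi$ through one of the faithful representations of this section and reduce the statement to a leading-term computation in the basis $\{P^u : u\in R\}$ of $\tilde{M}(R,S_R)$ furnished by Theorem \ref{isomorfismi di algebre}. First I would note that $\phi$ is a well-defined homomorphism: the only defining relations of $R^A$ are the commutations $\sigma\tau=\tau\sigma$ with $m(\sigma,\tau)=2$, and these are respected since then $T_\sigma T_\tau=T_{\sigma\tau}=T_{\tau\sigma}=T_\tau T_\sigma$, while each $T_s$ is invertible. Fix $x\in\{-1,q\}$ and set $\Sigma^x:=\sigma^{x,R}\circ\phi$, so that $\Sigma^x(\sigma)=f^x(\sigma)$ on generators. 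Because $\Sigma^x$ factors through $\phi$, it suffices to prove that $\Sigma^x$ is injective; I would treat $x=-1$, the case $x=q$ being identical.

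The combinatorial heart is a normal form. I would argue that every $g\in R^A\setminus\{e\}$ admits a \emph{collected} expression $g=\sigma_1^{n_1}\cdots\sigma_k^{n_k}$ with $\sigma_j\in S_R$, $n_j\in\mathbb{Z}\setminus\{0\}$ and $k\geq 1$, obtained from any word for $g$ by repeatedly shuffling together (via the commutation relations) and merging any two blocks on the same generator all of whose intervening letters commute with it. This process strictly decreases the number of blocks, hence terminates, and since it uses only relations valid in $R^A$ it cannot reach the empty word when $g\neq e$. The key claim is that the underlying word $w:=\sigma_1\sigma_2\cdots\sigma_k$ is reduced in the right-angled Coxeter group $R$. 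This is precisely where right-angledness enters: by Tits' description of reduced words in a right-angled Coxeter group, $w$ fails to be reduced exactly when some pair of equal letters $\sigma_i=\sigma_j$ (with $i<j$) has all letters between them commuting with it, and that configuration is what collection has eliminated. Hence $\ell(w)=k$ in $R$.

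Given this, I would expand, using Proposition \ref{propPotenze}, $\Sigma^{-1}(g)=\prod_{j=1}^k f^{-1}(\sigma_j)^{n_j}=\prod_{j=1}^{k}\big((-1)^{n_j}\I+(q^{n_j}-(-1)^{n_j})P^{\sigma_j}\big)$. Distributing over subsets $T\subseteq[k]$, the operator attached to $T$ is $\prod_{j\in T}P^{\sigma_j}=P^{u_T}$, where $u_T$ is the Coxeter monoid product of the selected generators (Theorem \ref{isomorfismi di algebre}), and $\ell(u_T)\leq|T|$. Since $\ell(w)=k$, the equality $u_T=w$ forces $|T|=k$, i.e. $T=[k]$, so the coefficient of $P^w$ equals $\prod_{j=1}^{k}(q^{n_j}-(-1)^{n_j})$, which is nonzero because each $n_j\neq 0$. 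As $w\neq e$, the linear independence of $\{P^u\}$ (Theorem \ref{isomorfismi di algebre}) yields $\Sigma^{-1}(g)\neq\I=\Sigma^{-1}(e)$, so $\ker\Sigma^{-1}=\{e\}$ and $\phi$ is injective.

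For the specialization at $q=t\in\mathbb{Q}\setminus\{-1,0,1\}$, each factor $t^{n_j}-(-1)^{n_j}$ is still nonzero, since a rational $t$ with $t^n=\pm 1$ for some $n\neq 0$ must lie in $\{-1,1\}$; hence the same leading coefficient survives over $\mathbb{Q}$ and the identical argument gives injectivity of $\Sigma^{x,t}$, while Proposition \ref{propPotenze} at $n=-1$ exhibits the inverse of each $f^x(s)$, so $\Sigma^{x,t}$ genuinely lands in $\Aut(\mathbb{Q}\otimes_{\mathbb{Z}}V_R)$. I expect the only real obstacle to be the normal-form lemma: making the collection procedure precise and invoking the reduced-word criterion for right-angled Coxeter groups. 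Everything after that is a mechanical leading-term extraction resting on Theorem \ref{isomorfismi di algebre} and Proposition \ref{propPotenze}.
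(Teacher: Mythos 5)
Your proof is correct and follows essentially the same route as the paper's: reduce an arbitrary nontrivial element to a block form $\sigma_1^{n_1}\cdots\sigma_k^{n_k}$ whose underlying word $\sigma_1\cdots\sigma_k$ is reduced in $R$, then extract the coefficient of $P^{\sigma_1\cdots\sigma_k}$ via Proposition \ref{propPotenze} and conclude by the linear independence coming from Theorem \ref{isomorfismi di algebre}, with the same specialization argument at $q=t$. The only difference is that you supply a justification (the collection procedure plus Tits' reduced-word criterion for right-angled groups) for the step that the paper merely asserts, namely that a reduced Artin word with maximal blocks projects to a reduced Coxeter word.
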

\begin{proof} We recall that a reduced word in a right-angled Artin group is a word with minimum number of syllables (see Section \ref{1}). Let $s^{h_1}_1s^{h_2}_2 \cdots s^{h_k}_k$ be any reduced word for $w \in R^A$, where $w$ is not the identity.
Putting $h_i=1$ for all $i\in [k]$, by the minimality of $k$ we obtain a reduced word $s_1s_2 \cdots s_k$ in the Coxeter group $R$. By Theorem \ref{teorema artin} this operation
defines a function $R^A \rightarrow R$.
Notice that, by Proposition \ref{propPotenze}, we have that
$$f^q(s_i^{h_i})=[f^q(s_i)]^{h_i}=q^{h_i}\I-(q^{h_i}-(-1)^{h_i})P^{s_i}$$ and
$$f^{-1}(s_i^{h_i})=[f^{-1}(s_i)]^{h_i}=(-1)^{h_i}\I-(q^{h_i}-(-1)^{h_i})P^{s_i}.$$ Hence we obtain
\begin{eqnarray*}
  \sigma^{q,R}(\phi(w)) &=& (-1)^k(q^{h_1}-(-1)^{h_1})\cdots (q^{h_k}-(-1)^{h_k}) P^{s_1\cdots s_k}+a, \\
  \sigma^{-1,R}(\phi(w)) &=& (q^{h_1}-(-1)^{h_1})\cdots (q^{h_k}-(-1)^{h_k}) P^{s_1 \cdots s_k}+a',
\end{eqnarray*} for
some $a,a' \in M_0(R,S)$ independent of $P^{s_1 \cdots s_k}$. Both right hand sides of the above expressions are not the identity by Theorem \ref{isomorfismi di algebre}. Thus
we conclude that $ \sigma^{q,R}(\phi(s^{h_1}_1s^{h_2}_2 \cdots s^{h_k}_k))\neq \I$ and $ \sigma^{-1,R}(\phi(s^{h_1}_1s^{h_2}_2 \cdots s^{h_k}_k))\neq \I$.
Therefore, by Theorem \ref{rep faith}, we have that $\phi(s^{h_1}_1s^{h_2}_2 \cdots s^{h_k}_k)\neq \I$ and then $\phi$ is injective.

 We have proved that $\sigma^{x,R}\circ \phi : R^A \rightarrow \Aut(A\otimes_{\mathbb{Z}} V_R)$ is injective, for all $x\in \{-1,q\}$; since
 $t \in \mathbb{Q}\setminus \{-1,0,1\}$ implies $(t^{h_1}-(-1)^{h_1})\cdots (t^{h_k}-(-1)^{h_k})\neq 0$, by specialization, we obtain the
faithful representations stated.
\end{proof}
Let us denote with $I^*(R;A)$ the group of invertible elements of the incidence algebra $I(R;A)$ and with $A[R^M]^*$ the one of invertible elements of the
monoid $A$-algebra of $R^M$. From Theorem \ref{rappresentazioni Artin} and Corollary \ref{cor inject 2} we can deduce the next result.
\begin{cor} \label{corollario iniezioni} Let $R$ be a right-angled Coxeter group. Then we have the following injections of groups:
  $$R^A \hookrightarrow \mathcal{H}^*(R) \hookrightarrow A[R^M]^* \hookrightarrow I^*(R;A).$$
\end{cor}

We end noting that the modules $\mathbb{Q} \otimes_{\mathbb{Z}} V_v$, where $V_v$ are the  $\mathbb{Z}[W^M]$-modules defined in Equation \eqref{Vv}, give rational
representations of the Artin group $R^A$, of dimension $|[e,v]|< \infty$, for any $v\in R$. In fact $A\otimes_{\mathbb{Z}} V_v$ are modules of the Coxeter monoid algebra
$A[R^M]$, and then of the Hecke algebra of $R$, by Theorem \ref{rep faith}, and so the group $R^A$ acts on $\mathbb{Q} \otimes_{\mathbb{Z}} V_v$, by Theorem
\ref{rappresentazioni Artin}. For example, when $v=e$ we have that $\Sigma^{q,t}$ gives the alternating representation, for all $t\in \mathbb{Q}\setminus \{-1,0,1\}$.

\section*{Acknowledgements}
I am grateful to Francesco Brenti and Nicol\'as Libedinsky for their useful suggestions. In particular, the existence and the importance of the main problem investigated here
was pointed out to me by Prof. Libedinsky. I also want to thank Matthieu Calvez for some remarks.

This research was supported by Postdoctorado FONDECYT-CONICYT number 3160010.

\end{document}